\documentclass[twocolumn]{autart}    
\usepackage{graphicx}     
\usepackage{subcaption}  
\usepackage{amssymb}
\usepackage{amsmath}  

\usepackage{algorithmic}
\usepackage{algorithm}
\usepackage[round,authoryear]{natbib}

\allowdisplaybreaks[3]

\begin{document}

\begin{frontmatter}

\title{Inverse reinforcement learning for indefinite mean-field social optimization with multiplicative noise} 

\thanks[footnoteinfo]{Corresponding author Bing-Chang Wang. }
\author[Paestum]{Ying Cao}\ead{yingcao@mail.sdu.edu.cn},    
\author[Rome]{Xun Li}\ead{li.xun@polyu.edu.hk},  
\author[Paestum]{Bing-Chang Wang}\ead{bcwang@sdu.edu.cn}

\address[Paestum]{School of Control Science and Engineering, Shandong University, Jinan 250061, China}  
\address[Rome]{Department of Applied Mathematics, The Hong Kong Polytechnic University, Hong Kong, China}             

\begin{keyword}                           
Inverse reinforcement learning; Mean-field control; Reinforcement learning; Model-free control; Linear quadratic Gaussian.               
\end{keyword}                             

\begin{abstract}                          
This paper studies the inverse reinforcement learning (RL) problem for linear-quadratic mean-field (MF) social optimization. The considered system features multiplicative noise and indefinite cost weights, which violate standard convexity assumptions and pose analytical challenges. The goal is to recover unknown social cost weights from expert demonstrations and reproduce the optimal control policies. This requires solving coupled stochastic algebraic Riccati equations and Lyapunov equations with unknown system dynamics. To this end, we first propose a model-based inverse RL algorithm with two sequential loops that separately handle individual and MF dynamics, and we prove its convergence and closed-loop stabilizability. Moreover, we characterize the non-uniqueness of the recovered cost weights. To eliminate reliance on system dynamics, we develop a model-free inverse RL algorithm using integral RL and least-squares identification, which requires only measured trajectory data satisfying mild rank conditions. Finally, numerical simulations validate the effectiveness of the proposed approaches.
\end{abstract}

\end{frontmatter}

\section{Introduction}
Large-scale multi-agent systems have attracted significant attention in both control theory and artificial intelligence. As the number of agents grows, the complexity of pairwise interactions increases dramatically, rendering conventional decentralized modeling and control computationally intractable. Mean-field (MF) games and control overcome this challenge by approximating the collective influence of all agents through an MF term, thereby decoupling the interactions and avoiding the curse of dimensionality \citep{Lasry2007,Bensoussan2013}. They have been widely applied in diverse fields, including smart grids \citep{Ma2013,Huang2025}, economics \citep{Wang2019}, mobile communications \citep{WangY2024,Gu2026}, and biological networks \citep{Petrakova2025}.

MF games describe non-cooperative decision-making, where agents optimize individual objectives \citep{Huang2007}. MF social control, by contrast, pursues social optima in large-population cooperative systems, which form a class of team decision problems \citep{Ho1980}. The problem aims to minimize the social cost, defined as the sum of all agents' individual costs, with the linear-quadratic (LQ) formulation receiving significant attention due to its analytical tractability. In \cite{Huang2012}, the authors proposed the social certainty equivalence approach to asymptotically attain social optima in LQ MF control problems. Following this research direction, a series of extensions have been developed for volatility uncertainties \citep{Huang2021}, input constraints \citep{Du2022}, Markov jump parameters \citep{Wang2017}, and model uncertainties \citep{WangBC2021}. Although these methods perform well in ideal settings, they have two major limitations. First, they rely on full knowledge of the system dynamics, which restricts their applicability in real-world scenarios. Second, all of them require manual design of cost functions, which relies heavily on subjective judgment and practical experience, making it challenging to construct targeted cost functions in complex environments.

To eliminate the reliance on explicit system models, reinforcement learning (RL) \citep{Sutton2018} has been employed to develop data-driven methods for addressing MF games and social optimization problems. \cite{Xu2023} used a policy iteration (PI) algorithm enhanced with integral RL to solve continuous-time LQ MF games in a completely model-free setting. \cite{WangBC2024} developed an online value iteration algorithm for LQ MF social control with ergodic cost functions. \cite{Xu2025} proposed a model-free approach using data-driven PI that bypasses the traditional double-loop structure. These works demonstrate the effectiveness of RL in learning optimal policies from data. 
Despite the model-free advantage of RL, the cost function still needs to be predefined. In practice, constructing a cost function that yields the desired optimal behavior is a non-trivial task, and often requires substantial domain knowledge. This limitation naturally motivates alternative paradigms that can directly infer latent objectives from expert demonstrations.

Originating from \cite{Andrew2000}, inverse RL adopts a data-driven scheme to recover cost functions from observed expert behaviors, thereby eliminating the need for manual construction. For instance, in autonomous driving, the vehicle infers reward functions from expert trajectories to mimic human driving behavior. Inverse RL has found extensive applications across various domains, including intelligent transportation \citep{You2019,Zhao2026}, smart grids \citep{Chen2024}, aerospace \citep{Luke2026}, and economics \citep{Sun2023}. In the context of optimal control and games, inverse RL provides a data-driven counterpart to inverse optimal control (IOC), inferring stabilizing cost functions without prior knowledge of the system dynamics. \cite{Xue2022} proposed an inverse RL algorithm for tracking control based on IOC. \cite{Xue2023} developed an inverse RL approach to solve the behavior imitation problem for discrete-time systems. \cite{Lian2024} proposed a data-driven inverse RL algorithm for nonzero-sum multiplayer games. 

Most existing inverse RL studies focus on deterministic systems, while stochastic systems with multiplicative noise have received less attention. \cite{Sun2025} studied inverse RL for LQ stochastic optimal control. \cite{Chen2023} developed an adversarial inverse RL method for discrete-time finite-horizon MF noncooperative games. However, these studies have two limitations. First, existing MF inverse RL methods address only noncooperative games, whereas inverse RL for MF social optimization remains unexplored. Second, they consider only (semi-)positive definite cost weights, ignoring the more general indefinite-weight case. To the best of our knowledge, this paper is the first to fill this research gap, which has hindered behavioral analysis and cost function recovery for large-scale stochastic cooperative populations.

In this paper, we study an inverse RL problem for indefinite LQ MF social optimization with multiplicative noise. The objective is to infer unknown and possibly indefinite social cost weights from expert demonstrations, enabling a learner population to imitate the expert's optimal cooperative behavior without prior knowledge of the system dynamics. Unlike standard RL that performs policy optimization via trial and error for a given cost function, inverse RL addresses the dual problem of recovering the unknown objective that replicates the expert's optimal behavior. This involves solving an inverse mapping from policies to costs rather than a direct optimization from costs to policies.

This inverse RL problem poses significant technical challenges beyond the standard MF social control or RL setting. First, multiplicative noise that depends on both the state and control introduces additional unknown system information and leads to two stochastic algebraic Riccati equations (SAREs) in the expert's MF social control problem. Consequently, solving the inverse RL problem of reconstructing the unknown indefinite cost weights is associated with the two SAREs, a challenging problem that remains unaddressed in existing work. 
Second, the indefiniteness of weight matrices not only makes the social cost potentially unbounded below, rendering the MF social control problem ill-posed, but also invalidates the standard positive-definiteness assumptions that are crucial for stability guarantees in most inverse RL algorithms. Finally, when the system dynamics are unknown, a model-free approach needs to be developed for solving the inverse RL problem. 

To overcome these hurdles, we develop an inverse RL framework that covers both model-based and model-free settings. Starting from a model-based iterative algorithm, we address the difficulties caused by multiplicative noise and indefinite weights. Specifically, we observe that the second SARE involves two solution matrices, which would require solving a bivariate Lyapunov equation under a standard treatment. By subtracting the two SAREs to derive a new equation, we eliminate the terms associated with the diffusion coefficients. This not only reduces computational complexity but also simplifies the convergence and stability analysis. To tackle the indefiniteness issue, we introduce linear matrix inequality conditions to ensure the well-posedness of the problem and the existence of stabilizing solutions to the two SAREs. 
In the stability proof, unknown weight terms are eliminated by differencing the Lyapunov equation between consecutive iterations. Then, using an equivalent condition for mean-square stabilizability, closed-loop stability after the first iteration can be guaranteed under indefinite weights. In addition, an appropriate choice of the initial weights ensures the stabilizability of initial gains.
For the model-free design, we transform the model-based iterative equations into data-driven formulations via integral RL, and identify unknown parameters using the least-squares method. 

The main contributions of this work are summarized as follows. 
\begin{enumerate}
	\item[(1).] Under indefinite social cost weights and multiplicative noise, we develop a model-based inverse RL algorithm consisting of two sequential loops, which respectively focus on individual dynamics and MF dynamics. By iteratively solving the corresponding Lyapunov equation and SARE in each loop, the algorithm can converge to equivalent indefinite social cost weights for the learner, thereby learning the expert's optimal feedback gains. 
	\item[(2).] A rigorous convergence proof for the model-based inverse RL algorithm is provided, where the boundedness of iterative sequences is established by mathematical induction. A key result is that the feedback gains across all iterations ensure closed-loop stability under indefinite weights, which extends existing inverse RL stability theory beyond the conventional positive (semi-)definite case. 
	Moreover, we illustrate the non-uniqueness of the equivalent weights, and characterize all possible solutions to the inverse RL problem.
	\item[(3).] To eliminate the dependence on system dynamics, we develop a model-free inverse RL algorithm via integral RL, which requires only measured trajectory data for implementation. Furthermore, the convergence is established by showing its equivalence to the model-based RL algorithm under rank conditions.
\end{enumerate}

The remainder of this paper is organized as follows. In Section~\ref{1}, we formulate the indefinite MF social optimization, and further propose an inverse RL problem. Section~\ref{2} presents a model-based inverse RL algorithm and analyzes its theoretical properties. In Section~\ref{3}, we derive data-driven iterative equations and further develop a model-free inverse RL algorithm using only measured trajectory data. Section~\ref{4} provides numerical simulations to validate the effectiveness of the proposed approaches. Finally, Section~\ref{5} concludes this paper.

\textbf{Notation:} 
Let $\mathbb{R}$ be the set of real numbers. $\mathbb{R}^n$ is the $n$-dimensional Euclidean space, and $\mathbb{R}^{n\times m}$ is the set of all $n\times m$ real matrices. $\mathbb{S}^n$ denotes the set of all $n\times n$ real symmetric matrices.
For $v\in \mathbb{R}^n$ and $M\in \mathbb{R}^{n\times m}$, $v^{\mathrm{T}}$ and $M^{\mathrm{T}}$ denote their transposes, respectively; $\|\cdot\|$ denotes the Euclidean norm for $v$ and the Frobenius norm for $M$; $\|v\|_Q^2 = v^\mathrm{T}Qv$.  For a symmetric matrix $N\in\mathbb{S}^n$, $N>0$ ($N\ge 0$) indicates that $N$ is positive (semi-)definite. $\otimes$ denotes the Kronecker product. $\sigma(x(s),0\le s\le t)$ denotes the $\sigma$-algebra generated by the trajectory $\{x(s)\}_{0\le s\le t}$.

\section{Problem Formulation}\label{1}
In this section, we consider two large population systems with identical stochastic unknown dynamics for imitation learning: the expert population and the learner population. The expert solves an MF social control problem with indefinite social cost weights and generates optimal system trajectories. The learner has no access to the expert's social cost function or system dynamics, and seeks to learn the underlying social cost function solely from observed trajectories for imitating the expert's behavior.
\subsection{Mean-Field Social Control for Expert Population}\label{1.1}
Consider an expert population system with $N$ agents. The dynamics of the $i$th expert agent are described by the following stochastic differential equation (SDE):
\begin{align}\label{x_e}
	dx_{ei}(t) =& [Ax_{ei}(t)+Bu_{ei}(t)]dt \cr
	& + [Cx_{ei}(t)+Du_{ei}(t)]dw_i(t), \ t\ge 0,
\end{align}
where $x_{ei}(t)\in\mathbb{R}^n$ and $u_{ei}(t)\in\mathbb{R}^{m}$ are the state and control input (or policy as we shall see later) of an expert agent $i\in\mathcal{N}\triangleq\{1,2,\dots,N\}$, respectively.  $A, C\in\mathbb{R}^{n\times n}$ and $B, D\in\mathbb{R}^{n\times m}$ are unknown constant matrices. The noise processes $\{w_i(t), i\in \mathcal{N}\}$ are mutually independent standard one-dimensional Brownian motions, which are defined on a complete filtered probability space $(\Omega, \mathcal{F}, \left\lbrace \mathcal{F}_t\right\rbrace _{t\ge 0}, \mathbf{P})$. 

We denote by $x_e = \{x_{e1}, \dots , x_{eN}\}$ and $u_e = \{u_{e1}, \dots , \\u_{eN}\}$ the sets of states and control policies for the expert population, respectively. For an initial state set $x_e^0 = \{x_{e1}^0, \dots , x_{eN}^0\}$, we make the following assumption. 
\begin{assum}\label{a:initial_state}
	The initial states $\{x_{ei}^0,\,i\in \mathcal{N}\}$ are independent of the Brownian motions $\{w_i(t), i\in \mathcal{N}\}$, and are mutually independent with a common expectation $\mathbb{E}x_{ei}^0=\bar{x}_e^0$. In addition, there exists a finite constant $c_0$ such that $\max_{1\le i \le N}\mathbb{E}\|x_{ei}^0\|^2 < c_0$.
\end{assum}

The cost function for each expert agent is given by
\begin{align}\label{J_ei}
	J_{ei}(u_{e})=\mathbb{E} \left[ \int_{0}^{\infty}\big(\|x_{ei}-\Gamma_e x_{e}^{(N)}\|_{Q_e}^2+\|u_{ei}\|_{R_e}^2\big) d\tau\right], 
\end{align}
where the weight matrices $Q_e=Q_e^{\top}\in \mathbb{S}^{n}$, $R_e=R_e^{\top}\in \mathbb{S}^{m}$, and $\Gamma_e\in \mathbb{R}^{n\times n}$. Notably, $Q_e$ and $R_e$ are symmetric matrices that may be indefinite. Here, $x_{e}^{(N)}=\frac{1}{N}\sum_{j=1}^{N}x_{ej}$ denotes the average state of the expert population. The social cost function for the expert population is further defined as
\begin{align}\label{J_e}
	J_e^{soc}(u_{e})=&\sum_{i=1}^{N}J_{ei}(u_{e})\nonumber\\
	=&\mathbb{E} \bigg[ \int_{0}^{\infty}\sum_{i=1}^{N}\big(x_{ei}^{\top}Q_ex_{ei}-(x_{e}^{(N)})^{\top}Q_{\Gamma_e}x_{e}^{(N)}\nonumber\\
	&+u_{ei}^{\top}R_eu_{ei}\big)d\tau\bigg],
\end{align}
where $Q_{\Gamma_e}=\Gamma_e^{\top}Q_e+Q_e\Gamma_e-\Gamma_e^{\top}Q_e\Gamma_e\in \mathbb{S}^{n}$.

\begin{defn}\label{d1}
	The system \eqref{x_e} is said to be mean-square stabilizable if there exists a constant matrix $F\in \mathbb{R}^{m\times n}$ such that the (unique) solution to 
	\[dx_{ei}(t) = (A-BF)x_{ei}(t)dt + (C-DF)x_{ei}(t)dw_i(t)\] 
	satisfies $\lim_{t\to \infty}\mathbb{E}[\|x_{ei}(t)\|^2]=0$ for any $x_{ei}^0$. In this case, $F$ is called a stabilizer of system \eqref{x_e}, and the feedback control $u_{ei}(\cdot) = Fx_{ei}(\cdot)$ is called stabilizing.
\end{defn}
We now present several equivalent conditions for verifying the stabilizability of system \eqref{x_e}, which play an important role in the subsequent analysis.

\begin{lem}[\cite{Rami2000}]\label{le:stable}
	The three statements below are equivalent.
	\begin{enumerate}
		\item[1)] System \eqref{x_e} is mean-square stabilizable.
		\item[2)] There exists a matrix $F$ and a symmetric positive definite matrix $X>0$ such that
		\begin{align*}
			&(A-BF)^{\top}X+X(A-BF)\cr
			&+(C-DF)^{\top}X(C-DF)< 0.
		\end{align*}
		\item[3)] There exists a matrix $F$ such that for any given symmetric matrix $Y$, the linear matrix equation
		\begin{align*}
			&(A-BF)^{\top}X+X(A-BF)\cr
			&+(C-DF)^{\top}X(C-DF)+Y = 0
		\end{align*}
		admits a unique solution $X$. Moreover, if $Y > 0$ (respectively, $Y\ge 0$), then $X > 0$ (respectively, $X\ge 0$). 
	\end{enumerate}
\end{lem}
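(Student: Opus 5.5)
The plan is to reduce everything to a deterministic linear ODE for the second-moment matrix and then work with the associated linear operator. Fix a gain $F$ and set $A_F=A-BF$ and $C_F=C-DF$. By It\^o's formula applied to $x x^\top$, the matrix $P(t)=\mathbb{E}[x_{ei}(t)x_{ei}(t)^\top]$ satisfies
\[
\dot P=\mathcal{L}_F(P):=A_FP+PA_F^\top+C_FPC_F^\top, \qquad P(0)=\mathbb{E}[x_{ei}^0(x_{ei}^0)^\top].
\]
The adjoint of $\mathcal{L}_F$ with respect to $\langle X,Y\rangle=\mathrm{tr}(XY)$ is
\[
\mathcal{L}_F^*(X)=A_F^\top X+XA_F+C_F^\top XC_F,
\]
which is exactly the operator appearing in 2) and 3). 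Both maps are positive in the Metzler sense: $e^{t\mathcal{L}_F}$ maps the cone $\mathbb{S}^n_+$ into itself. To see this, note that $\mathcal{L}_F$ is the sum of $P\mapsto A_FP+PA_F^\top$, whose exponential $P\mapsto e^{A_Ft}Pe^{A_F^\top t}$ preserves the cone, and the cone-preserving map $P\mapsto C_FPC_F^\top$. A Trotter or variation-of-constants argument then gives positivity of $e^{t\mathcal{L}_F}$. With this, statement 1) for the gain $F$ is equivalent to $\mathcal{L}_F$ being Hurwitz. One direction is immediate. For the other, $\mathbb{E}\|x\|^2=\mathrm{tr}P(t)\to0$ for all $P(0)\ge0$ forces $e^{t\mathcal{L}_F}P_0\to0$ on the cone, and since the cone spans $\mathbb{S}^n$ this gives $e^{t\mathcal{L}_F}\to0$ on all of $\mathbb{S}^n$, hence $\mathcal{L}_F$ is Hurwitz.

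\textbf{1)$\Rightarrow$3).} If $\mathcal{L}_F$ (equivalently $\mathcal{L}_F^*$) is Hurwitz, then $\mathcal{L}_F^*$ is invertible, so $\mathcal{L}_F^*(X)+Y=0$ has the unique solution
\[
X=\int_0^\infty e^{t\mathcal{L}_F^*}(Y)\,dt .
\]
Positivity of $e^{t\mathcal{L}_F^*}$ gives $X\ge0$ when $Y\ge0$. For $Y>0$, the integrand is at least $e^{A_F^\top t}Ye^{A_Ft}$: the difference is again an integral of cone elements, by the variation-of-constants formula with the positive perturbation $C_F^\top(\cdot)C_F$. Since that lower bound is positive definite for small $t$, we get $X>0$.

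\textbf{3)$\Rightarrow$2).} Take $Y=I$. Statement 3) gives an $X>0$ with $\mathcal{L}_F^*(X)=-I<0$, which is exactly 2).

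\textbf{2)$\Rightarrow$1).} Given $X>0$ with $\mathcal{L}_F^*(X)<0$, pick $\epsilon>0$ such that $\mathcal{L}_F^*(X)\le-\epsilon X$. Applying It\^o's formula to $V=x^\top Xx$ gives $\frac{d}{dt}\mathbb{E}V=\mathbb{E}[x^\top\mathcal{L}_F^*(X)x]\le-\epsilon\,\mathbb{E}V$, where the stochastic integral has zero mean because second moments of a linear SDE are finite on bounded intervals. Gronwall's inequality gives exponential decay of $\mathbb{E}V$, and $X>0$ transfers this to $\mathbb{E}\|x\|^2\to0$. The same $F$ therefore works in all three statements.

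The main obstacle is establishing positivity of $e^{t\mathcal{L}_F}$ and $e^{t\mathcal{L}_F^*}$. My first approach is to expand in a Dyson (Peano--Baker) series around $P\mapsto e^{A_Ft}Pe^{A_F^\top t}$, each of whose terms is manifestly a cone element. Everything else is routine Lyapunov and spectral reasoning. This is essentially the argument of \cite{Rami2000}, which the paper cites for the result.
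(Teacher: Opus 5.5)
Your proposal is correct and follows the standard generalized Lyapunov-operator argument behind this lemma; the paper itself gives no proof and simply imports the result from \cite{Rami2000}. The step you single out as the main obstacle is actually immediate: for deterministic $x^0$ (where all moments are finite, which also cleanly justifies the zero-mean stochastic integrals you use) one has $e^{t\mathcal{L}_F}\big(x^0(x^0)^{\top}\big)=\mathbb{E}[x(t)x(t)^{\top}]\ge 0$, and since every element of $\mathbb{S}^n_+$ is a sum of such rank-one matrices, $e^{t\mathcal{L}_F}$ preserves the cone, as does $e^{t\mathcal{L}_F^*}$ by self-duality of $\mathbb{S}^n_+$ under the trace pairing, so no Dyson series is needed.
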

\begin{rem}
	When $C=0$ and $D=0$, the conditions in Lemma~\ref{le:stable} reduce to the following forms:
	for condition 2), $(A-BF)^{\top}X+X(A-BF)< 0$;
	for condition 3), $(A-BF)^{\top}X+X(A-BF)+Y = 0$.
	These reduced forms coincide with the stabilizability condition for the deterministic system $\dot{x}_{ei}=(A-BF)x_{ei}$.
\end{rem}

\begin{assum}\label{a:MFS}
	System \eqref{x_e} is mean-square stabilizable.
\end{assum}

Under Assumption~\ref{a:MFS}, the set of admissible control policies for each expert agent $i\in \mathcal{N}$ is defined as
\begin{align*}
	\mathcal{U}_{ad}^i=\bigg\{&u_{ei} \mid u_{ei}(t)\ \text{is adapted to}\ \sigma(x_{ei}(s), 0\le s \le t),\\ &\mathbb{E}\int_{0}^{\infty}\|x_{ei}(t)\|^2dt < +\infty \bigg\}.
\end{align*}
Based on \cite{Huang2012} and \cite{Wang2021}, the objective of the LQ MF social control problem is to find a set of admissible policies $\hat{u}_e=\{\hat{u}_{e1}, \dots , \hat{u}_{eN}\}$ that is asymptotically socially optimal, such that
\begin{align*}
	\bigg|\frac{1}{N}J_e^{soc}(\hat{u}_e)-\frac{1}{N}\inf_{u_{ei}\in \mathcal{U}_{ad}^i, i\in \mathcal{N}}J_e^{soc}({u}_e)\bigg|=O\bigg(\frac{1}{\sqrt{N}}\bigg).
\end{align*}
The corresponding SAREs for the expert population are given by
\begin{align}
	&A^{\top}P_{e}+P_{e}A+C^{\top}P_{e}C-(B^{\top}P_{e}+D^{\top}P_{e}C)^{\top}\cr
	&\times \Lambda_e^{-1}(B^{\top}P_{e}+D^{\top}P_{e}C)+Q_e=0\label{P_e},\\
	&A^{\top}\Pi_{e}+\Pi_{e}A+C^{\top}P_{e}C-(B^{\top}\Pi_{e}+D^{\top}P_{e}C)^{\top}\cr
	&\times \Lambda_e^{-1}(B^{\top}\Pi_{e}+D^{\top}P_{e}C)+Q_e-Q_{\Gamma_e}=0\label{Pi_e},
\end{align}
where $\Theta_e=D^{\top}P_{e}D$ and $\Lambda_e =R_e+\Theta_e$. Here, $P_{e}\in \mathbb{S}^n$ and $\Pi_{e}\in \mathbb{S}^n$ are the solutions to the two SAREs, respectively. From \cite{Wang2021}, the social optimal control policies, taken as the control inputs of the expert population, are given by
\begin{align}\label{u_e}
	u_{ei} \triangleq & -K_{e}x_{ei}-\bar{K}_{e}\bar{x}_{e}\cr
	=&- \Lambda_e^{-1}(B^{\top}P_{e}+D^{\top}P_{e}C)x_{ei}\cr
	&-\Lambda_e^{-1}B^{\top}(\Pi_{e}-P_{e})\bar{x}_e,\quad \forall i\in \mathcal{N},
\end{align}
where $K_e\triangleq \Lambda_e^{-1}(B^{\top}P_{e}+D^{\top}P_{e}C)$ and $\bar{K}_{e}\triangleq \Lambda_e^{-1}B^{\top}(\Pi_{e}-P_{e})$. Here, $\bar{x}_{e}=\mathbb{E}[x_{ei}]$ is the MF state, which serves as an approximation to $x_{e}^{(N)}$ and satisfies
\begin{align}\label{barx_e}
	\dot{\bar{x}}_e(t)=\big(A-B(K_e+\bar{K}_{e})\big)\bar{x}_e(t), \ \bar{x}_e(0)=\bar{x}_e^0.
\end{align} 
Under Assumption~\ref{a:MFS}, the social cost function \eqref{J_e} has an upper bound, that is, $J_e^{soc}<+\infty$. However, since $Q_e$, $R_e$ and $Q_{\Gamma_e}$ in \eqref{J_e} are indefinite, \eqref{J_e} may be unbounded below, making the MF social control problem ill-posed. Thus, we adopt a linear matrix inequality approach, which can simultaneously handle the existence of solutions to the SAREs and the well-posedness of the social cost function through convex optimization.  

The solvability of SAREs \eqref{P_e} and \eqref{Pi_e} is equivalent to the existence of $P_e, \Pi_e \in \mathbb{S}^n$ satisfying
\begin{align*}
	\mathcal{M}(P_e)&\triangleq \begin{pmatrix}
		\mathcal{A}_{P_e}+Q_e & P_e B + C^\top P_e D \\
		B^\top P_e + D^\top P_e C & R_e + D^\top P_e D
	\end{pmatrix} \ge 0,\\
	\mathcal{M}(\Pi_e)&\triangleq \begin{pmatrix}
		\mathcal{A}_{\Pi_e}+Q_e-Q_{\Gamma_e} & \Pi_e B + C^\top P_e D \\
		B^\top \Pi_e + D^\top P_e C & R_e + D^\top P_e D
	\end{pmatrix} \ge 0,
\end{align*}
where $\mathcal{A}_{P_e}=A^\top P_e + P_e A + C^\top P_e C$, $\mathcal{A}_{\Pi_e}=A^\top \Pi_e + \Pi_e A + C^\top P_e C$, and $R_e + D^\top P_e D > 0$. Let us define the following set $\mathcal{H}$ as 
\begin{align*}
	\mathcal{H}(P_e,\Pi_e,R_e)\triangleq \{&(P_e,\Pi_e)=(P_e^{\top},\Pi_e^{\top})\mid \mathcal{M}(P_e)\ge 0,\cr &\mathcal{M}(\Pi_e)\ge 0, R_e + D^\top P_e D > 0\}.
\end{align*}

\begin{assum}\label{a:lmi}
	$\mathcal{H}(P_e,\Pi_e,R_e) \ne \varnothing$, and $\mathcal{H}(P_e,\Pi_e,R_e)$ has a nonempty interior $(\tilde{P}_e,\tilde{\Pi}_e)$, in the sense that $\mathcal{M}(P_e)> 0$ and $\mathcal{M}(\Pi_e)> 0$.
\end{assum}

\begin{lem}[\cite{Rami2001}]\label{stabilizing}
	Subject to Assumptions~\ref{a:MFS} and \ref{a:lmi}, SAREs \eqref{P_e} and \eqref{Pi_e} admit a stabilizing solution, respectively. Moreover, the corresponding gains $K_e$ and $\bar{K}_e$ ensure that $A-BK_e$ and $A-B(K_e+\bar{K}_e)$ are Hurwitz.
\end{lem}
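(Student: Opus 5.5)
We will derive the result from the indefinite stochastic LQ theory of \cite{Rami2001}, applied in turn to the two SAREs. For \eqref{P_e}, the key fact is the LMI characterization: under mean-square stabilizability (Assumption~\ref{a:MFS}), if the set $\{P=P^\top:\mathcal{M}(P)\ge 0,\ R_e+D^\top PD>0\}$ has nonempty interior, then the maximal element $P_e$ of this set exists. This $P_e$ solves \eqref{P_e} with $R_e+D^\top P_eD>0$, and it is stabilizing: the gain $K_e=\Lambda_e^{-1}(B^\top P_e+D^\top P_eC)$ makes $(A-BK_e,C-DK_e)$ mean-square stable. Assumption~\ref{a:lmi} supplies the interior point $\tilde P_e$. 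To apply the theorem we only need to check that its hypotheses coincide with ours; its proof proceeds by Newton--Kleinman iteration with Lemma~\ref{le:stable} used at each step, and we do not redo it. Mean-square stability of $(A-BK_e,C-DK_e)$ implies that $A-BK_e$ is Hurwitz. To see this, take expectations in $dx=(A-BK_e)x\,dt+(C-DK_e)x\,dw$: the mean satisfies $\dot{\bar x}=(A-BK_e)\bar x$, and $\|\bar x\|^2\le \mathbb{E}\|x\|^2\to0$ for every initial condition. Alternatively, condition 2) of Lemma~\ref{le:stable} gives $(A-BK_e)^\top X+X(A-BK_e)<0$ with $X>0$, since the term $(C-DK_e)^\top X(C-DK_e)$ is positive semidefinite.

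Next, with $P_e$ fixed, we treat \eqref{Pi_e} as a deterministic Riccati equation in $\Pi_e$. Set $\hat Q=C^\top P_eC+Q_e-Q_{\Gamma_e}$, $S=D^\top P_eC$ and $\Lambda_e>0$. Then \eqref{Pi_e} reads
\[A^\top\Pi+\Pi A-(B^\top\Pi+S)^\top\Lambda_e^{-1}(B^\top\Pi+S)+\hat Q=0,\]
which is the ARE of the deterministic indefinite LQ problem $\dot x=Ax+Bu$ with cross-weighted cost $x^\top\hat Qx+2u^\top Sx+u^\top\Lambda_e u$. This is the special case $C=D=0$ of the same theory. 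Its LMI is exactly $\mathcal{M}(\Pi_e)\ge0$, and Assumption~\ref{a:lmi} gives a strict interior point $\tilde\Pi_e$. One subtlety is that Assumption~\ref{a:lmi} provides the pair $(\tilde P_e,\tilde\Pi_e)$ jointly, whereas the cross term in \eqref{Pi_e} now involves the maximal solution $P_e$ rather than $\tilde P_e$. We would handle this by monotonicity. Since $P_e\ge\tilde P_e$ by maximality, replacing $\tilde P_e$ with $P_e$ in the $(2,2)$ block $R_e+D^\top P D$ and in the $C^\top PC$ term adds the matrix
\[\begin{pmatrix}C^\top\\ D^\top\end{pmatrix}(P_e-\tilde P_e)\begin{pmatrix}C & D\end{pmatrix}\ge0\]
to $\mathcal{M}(\tilde\Pi_e)$, so strict positivity is preserved. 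Deterministic stabilizability of $(A,B)$ follows from Assumption~\ref{a:MFS} by the same argument as above. The deterministic result then yields a maximal solution $\Pi_e$ such that $A-B\Lambda_e^{-1}(B^\top\Pi_e+S)$ is Hurwitz.

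Finally we identify the gains. We have $\Lambda_e^{-1}(B^\top\Pi_e+D^\top P_eC)=K_e+\Lambda_e^{-1}B^\top(\Pi_e-P_e)=K_e+\bar K_e$, so $A-B(K_e+\bar K_e)$ is Hurwitz. The main obstacle is the coupling step in the previous paragraph: one must verify that the interior point of $\mathcal{H}$ remains an interior point once the $P$-slot is frozen at the maximal solution. The monotonicity argument above is what we would try first.
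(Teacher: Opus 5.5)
Your proposal is correct and takes the same route as the paper, which offers no argument beyond invoking the LMI/maximal-solution theory of \cite{Rami2001}. Your additional step is valid and makes explicit the decoupling that the citation leaves implicit: you freeze $P_e$ so that \eqref{Pi_e} becomes a deterministic ARE with LMI $\mathcal{M}(\Pi_e)\ge 0$, and you transfer the strict interior point using $P_e\ge\tilde P_e$ and the congruence term $\begin{pmatrix}C^\top\\ D^\top\end{pmatrix}(P_e-\tilde P_e)\begin{pmatrix}C & D\end{pmatrix}\ge 0$ (your displayed matrix also correctly accounts for the off-diagonal $C^\top PD$ blocks, which your prose omits).
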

For the convenience of subsequent analysis, we define $S_e\triangleq \Pi_e-P_e$. Subtracting \eqref{P_e} from \eqref{Pi_e} yields
\begin{align}\label{S_e}
	&(A-BK_e)^{\top}S_e+S_e(A-BK_e)\cr
	&-S_eB\Lambda_e^{-1}B^{\top}S_e-Q_{\Gamma_e}=0.
\end{align}
\begin{rem}
	Compared with \eqref{Pi_e}, the terms $C^{\top}P_{e}C$ and $D^{\top}P_{e}C$ have disappeared in \eqref{S_e}, such that solving for $S_e$ does not directly depend on the unknown system matrices $C$ and $D$.
\end{rem}

\subsection{Inverse RL Problem for Learner Population}\label{1.2}
Consider a learner population with $N$ agents. The trajectory of the $i$th learner agent is governed by 
\begin{align}\label{x}
	dx_{i}(t) =& [Ax_{i}(t)+Bu_{i}(t)]dt \cr
	&+ [Cx_{i}(t)+Du_{i}(t)]dw_i(t), \ t\ge 0, \ i\in\mathcal{N},
\end{align}
where $x_{i}(t)\in\mathbb{R}^n$ and $u_{i}(t)\in\mathbb{R}^{m}$ are the state and control input of the learner agent, respectively. 
Denote the set of states and control inputs for the learner population as $x = \{x_{1}, \dots , x_{N}\}$ and $u = \{u_{1}, \dots , u_{N}\}$, respectively. Define the cost function for each learner agent as 
\begin{align}\label{Ji}
	J_{i}(u)=\mathbb{E} \left[ \int_{0}^{\infty}(\|x_{i}-\Gamma x^{(N)}\|_{Q}^2+\|u_{i}\|_{R}^2) d\tau\right], 
\end{align}
where $Q=Q^{\top}\in \mathbb{S}^{n}$, $R=R^{\top}\in \mathbb{S}^{m}$, $\Gamma\in \mathbb{R}^{n\times n}$, and $x^{(N)}=\frac{1}{N}\sum_{j=1}^{N}x_{j}$. Similarly, $Q$ and $R$ are not required to be positive (semi-)definite. The social cost function for the learner population is given by
\begin{align}\label{Jsoc}
	J^{soc}(u)
	=&\mathbb{E} \bigg[ \int_{0}^{\infty}\sum_{i=1}^{N}\big(x_{i}^{\top}Qx_{i}-(x^{(N)})^{\top}Q_{\Gamma}x^{(N)}\cr
	&+u_{i}^{\top}Ru_{i}\big)d\tau\bigg],
\end{align}
where $Q_{\Gamma}=\Gamma^{\top}Q+Q\Gamma-\Gamma^{\top}Q\Gamma\in \mathbb{S}^{n}$.
Define $\Theta=D^{\top}PD$ and $\Lambda =R+\Theta$. The social optimal control policy $u_{i}$ of each learner agent $i\in \mathcal{N}$ is given by
\begin{align}\label{u}
	u_{i} \triangleq&  -Kx_{i}-\bar{K}\bar{x}\cr
	=&- \Lambda^{-1}(B^{\top}P+D^{\top}PC)x_{i}\cr
	&-\Lambda^{-1}B^{\top}(\Pi-P)\bar{x}, 
\end{align}
where $K\triangleq \Lambda^{-1}(B^{\top}P+D^{\top}PC)$ and $\bar{K}\triangleq \Lambda^{-1}B^{\top}(\Pi-P)$. In addition, $P\in \mathbb{S}^{n}$ and $\Pi\in \mathbb{S}^{n}$ are the solutions to the following SAREs
\begin{align}
	&A^{\top}P+PA+C^{\top}PC-(B^{\top}P+D^{\top}PC)^{\top}\cr
	&\times \Lambda^{-1}(B^{\top}P+D^{\top}PC)+Q=0\label{P},\\
	&A^{\top}\Pi+\Pi A+C^{\top}PC-(B^{\top}\Pi+D^{\top}PC)^{\top}\cr
	&\times \Lambda^{-1}(B^{\top}\Pi+D^{\top}PC)+Q-Q_{\Gamma}=0\label{Pi},
\end{align}respectively. Let $\bar{x}=\mathbb{E}[x_{i}]$ be the MF state, which is an approximation to $x^{(N)}$ satisfying
\begin{align}\label{barx}
	\dot{\bar{x}}(t)=\big(A-B(K+\bar{K})\big)\bar{x}(t), \ \bar{x}(0)=\bar{x}^0.
\end{align}
Similarly, the learner population also satisfies Assumptions~\ref{a:initial_state}--\ref{a:lmi}, which will be directly used for the learner in the following analysis.
Define $S\triangleq \Pi-P$, then combine \eqref{P} and \eqref{Pi} to obtain 
\begin{align}\label{S}
	(A-BK)^{\top}S+S(A-BK)-SB\Lambda^{-1}B^{\top}S-Q_{\Gamma}=0.
\end{align}

\begin{rem}
	Similar to the expert case, the derived equation \eqref{S} is independent of $C$ and $D$. For brevity, we employ \eqref{S} rather than \eqref{Pi} throughout the subsequent analysis. This treatment is also consistent with the dynamic characterization of the MF state.
\end{rem}

We next impose a necessary assumption and a related definition to facilitate the subsequent inverse RL theoretical analysis.
\begin{assum}\label{a1}
	The weights $Q_e$, $R_e$ and $Q_{\Gamma_e}$ in the expert's social cost function \eqref{J_e}, the gains $K_e$ and $\bar{K}_e$ in \eqref{u_e}, as well as the solutions $P_e$ and $\Pi_{e}$ to the SAREs \eqref{P_e} and \eqref{Pi_e} are unknown to the learner. The learner can only obtain the state dataset $x_e$ and control dataset $u_e$ by observing the trajectories of the expert population.
\end{assum}

\begin{defn}
	Give a constant matrix $R$ with an appropriate dimension. If the indefinite weights $Q$ and $Q_{\Gamma}$ in SAREs \eqref{P} and \eqref{Pi} yield $K=K_{e}$ and $\bar{K}=\bar{K}_{e}$ for all agents, then $Q$ and $Q_{\Gamma}$ are called equivalent weights to $Q_e$ and $Q_{\Gamma_e}$ in \eqref{P_e} and \eqref{Pi_e}, respectively.
\end{defn}

In this article, we mainly study the following inverse RL problem.
\begin{prob}\label{IRL}
	Let Assumptions~\ref{a:initial_state}--\ref{a1} hold. Using only observed trajectory data, the learner population aims to reconstruct the unknown social cost function for imitating the expert’s trajectories. This is achieved by finding the equivalent weights $Q$ and $Q_\Gamma$ to obtain $K = K_e$ and $\bar{K}=\bar{K}_e$ for any $R\in \mathbb{R}^{m\times m}$. Furthermore, the control gains $K$ and $\bar{K}$ obtained in each iterative process are desired to stabilize the learner agents \eqref{x}. Since the symmetric matrices $Q$, $R$ and $Q_{\Gamma}$ are indefinite, this is called an inverse RL problem for indefinite LQ MF social optimization.
\end{prob}

\section{Model-Based Framework for the Inverse RL Problem}\label{2}
In this section, we address the inverse RL problem using a model-based framework. Utilizing system dynamics and observed trajectory data from the expert, we construct an iterative learning scheme to obtain equivalent indefinite cost weights and recover the expert’s optimal control gains. The approach solves iterative Riccati and Lyapunov equations with explicit system information, which lays a rigorous theoretical foundation for the subsequent model-free development.
\subsection{Model-Based Inverse RL via Policy Iteration}\label{2.1}
First, we present a theorem that provides new Lyapunov functions for solving the Inverse RL problem based on model information. 
\begin{thm}\label{t1}
	Let Assumption~\ref{a:lmi} hold. If the weights $Q$, $Q_\Gamma$, $R$ and the solutions $P$, $\Pi$ satisfy the two SAREs \eqref{P} and \eqref{Pi} for the learner and the following Lyapunov equations
	\begin{align}
		&(A-BK_e)^{\top}P+P(A-BK_e)+Q+K_e^{\top}RK_e\nonumber\\
		&+(C-DK_e)^{\top}P(C-DK_e)=0\label{Pk},\\
		&\big(A-B(K+\bar{K}_e)\big)^{\top}S+S\big(A-B(K+\bar{K}_e)\big)\nonumber\\
		&+\bar{K}_e^{\top}\Lambda\bar{K}_e-Q_{\Gamma}=0\label{Sk},
	\end{align}
	then the gain $K$ is equal to $K_e$, and $\bar{K}$ is equal to $\bar{K}_e$. 
\end{thm}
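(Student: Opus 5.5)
Subtract the SARE for $P$ from the Lyapunov equation \eqref{Pk}, complete the square to obtain $(K-K_e)^{\top}\Lambda(K-K_e)=0$ and hence $K=K_e$, and then repeat the argument for $S$ using \eqref{S} and \eqref{Sk}.

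\textbf{Step 1: $K=K_e$.} Since the same $P$ solves both \eqref{P} and \eqref{Pk}, I subtract \eqref{P} from \eqref{Pk}. Expanding \eqref{Pk}, the terms $A^{\top}P+PA+C^{\top}PC+Q$ cancel. What remains is
\begin{align*}
&-K_e^{\top}(B^{\top}P+D^{\top}PC)-(B^{\top}P+D^{\top}PC)^{\top}K_e+K_e^{\top}(R+D^{\top}PD)K_e\\
&+(B^{\top}P+D^{\top}PC)^{\top}\Lambda^{-1}(B^{\top}P+D^{\top}PC)=0 .
\end{align*}
I substitute $B^{\top}P+D^{\top}PC=\Lambda K$ and $R+D^{\top}PD=\Lambda$. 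This turns the identity into
\[
K_e^{\top}\Lambda K_e-K_e^{\top}\Lambda K-K^{\top}\Lambda K_e+K^{\top}\Lambda K=(K-K_e)^{\top}\Lambda(K-K_e)=0 .
\]
Assumption~\ref{a:lmi}, imposed on the learner, gives $\Lambda=R+D^{\top}PD>0$. Hence $\Lambda^{1/2}(K-K_e)=0$, and so $K=K_e$.

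\textbf{Step 2: $\bar K=\bar K_e$.} With $K=K_e$ in hand, I compare \eqref{Sk} with \eqref{S}; equation \eqref{S} follows from \eqref{P} and \eqref{Pi} via $S=\Pi-P$. Expanding the left side of \eqref{Sk} gives
\[
(A-BK)^{\top}S+S(A-BK)-\bar K_e^{\top}B^{\top}S-SB\bar K_e+\bar K_e^{\top}\Lambda\bar K_e-Q_\Gamma .
\]
I subtract \eqref{S} from this and use $B^{\top}S=\Lambda\bar K$, which is the definition $\bar K=\Lambda^{-1}B^{\top}(\Pi-P)$. This yields
\[
-\bar K_e^{\top}\Lambda\bar K-\bar K^{\top}\Lambda\bar K_e+\bar K_e^{\top}\Lambda\bar K_e+\bar K^{\top}\Lambda\bar K=(\bar K-\bar K_e)^{\top}\Lambda(\bar K-\bar K_e)=0 .
\]
Positivity of $\Lambda$ then gives $\bar K=\bar K_e$.

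\textbf{Expected obstacle.} The only delicate point is justifying $\Lambda>0$ for the learner's $P$, since $R$ and $Q$ are indefinite. This is exactly what Assumption~\ref{a:lmi} provides: the set $\mathcal H$ requires $R+D^{\top}PD>0$, and the learner's SAREs are taken with this stabilizing, well-posed solution per Lemma~\ref{stabilizing}. I will state this explicitly. I will also check that the $\Theta$ appearing in $K_e^{\top}(R+D^{\top}PD)K_e$ is the learner's $\Theta=D^{\top}PD$ and not $\Theta_e$; this holds because \eqref{Pk} is written in $P$. The rest is routine algebra.
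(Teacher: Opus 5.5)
Your proposal is correct and follows essentially the same route as the paper. You subtract the SARE \eqref{P} from the Lyapunov equation \eqref{Pk}, and then \eqref{S} from \eqref{Sk}. Using $B^{\top}P+D^{\top}PC=\Lambda K$ and $B^{\top}S=\Lambda\bar K$, you obtain $(K-K_e)^{\top}\Lambda(K-K_e)=0$ and $(\bar K-\bar K_e)^{\top}\Lambda(\bar K-\bar K_e)=0$, and then conclude from $\Lambda>0$ via Assumption~\ref{a:lmi}. One small remark: Step 2 does not actually need $K=K_e$, since \eqref{Sk} is written with the learner's own $K$, exactly as in \eqref{S}.
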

\begin{pf}
	Since $K=\Lambda^{-1}(B^{\top}P+D^{\top}PC)$ from \eqref{u}, equation \eqref{P} can be transformed into
	\begin{align*}
		A^{\top}P+PA+C^{\top}PC-K^{\top}\Lambda K+Q=0.
	\end{align*}
	Introducing $K_e$ from \eqref{u_e} into the above equation gives 
	\begin{align*}
		&(A-BK_e)^{\top}P+P(A-BK_e)+Q\nonumber\\
		&+(C-DK_e)^{\top}P(C-DK_e)+K_e^{\top}\Lambda K+K^{\top}\Lambda K_e\nonumber\\
		&-K^{\top}\Lambda K-K_e^{\top}\Theta K_e=0.
	\end{align*}
	Subtracting the above equation from \eqref{Pk} yields
	\begin{align}\label{K_e-K}
		(K_e-K)^{\top}\Lambda(K_e-K)=0.
	\end{align}
	By Assumption~\ref{a:lmi} that $\Lambda=R+D^{\top}PD>0$, equation \eqref{K_e-K} implies that $K=K_e$.
	
	Similarly, substituting $\bar{K}$ and $\bar{K}_e$ into \eqref{S} yields 
	\begin{align*}
		&\big(A-B(K+\bar{K}_e)\big)^{\top}S+S\big(A-B(K+\bar{K}_e)\big)\nonumber\\
		&+\bar{K}_e^{\top}\Lambda\bar{K}+\bar{K}^{\top}\Lambda \bar{K}_e-\bar{K}^{\top}\Lambda\bar{K}-Q_{\Gamma}=0.
	\end{align*}		
	Subtracting this equation from \eqref{Sk} derives
	\begin{align}\label{K_e-barK}
		(\bar{K}_e-\bar{K})^{\top}\Lambda(\bar{K}_e-\bar{K})=0.
	\end{align}
	Since $\Lambda$ is positive definite, it follows from \eqref{K_e-barK} that $\bar{K}=\bar{K}_e$. This completes the proof.   \hfill $\square$
\end{pf}

Then, by collecting the trajectory data $x_{ei}$ and $u_{ei}$ from expert agents, we can recover the control gains $K_e$ and $\bar{K}_e$ in \eqref{u_e}. The control law \eqref{u_e} for expert agent $i$ can be rearranged as
\[
	U_{ei} = -\tilde{K}_eX_{e},
\]
where $U_{ei}=[u_{ei}(t-(k-1)T),\dots,u_{ei}(t-T),u_{ei}(t)]$, $\tilde{K}_e=[K_e, \bar{K}_e]$, $X_{e}=[X_{ei}, \bar{X}_e]^{\top}$, $X_{ei}=[x_{ei}(t-(k-1)T),\dots,x_{ei}(t-T),x_{ei}(t)]^{\top}$, and $\bar{X}_e=[\bar{x}_{e}(t-(k-1)T),\dots,\bar{x}_{e}(t-T),\bar{x}_{e}(t)]^{\top}$.
Using the batch least-square method, we obtain the estimation $\hat{\tilde{K}}_e$ of $\tilde{K}_e$ as
\begin{equation}\label{K_e}
	\hat{\tilde{K}}_e=-U_{ei}X_e^{\top}(X_eX_e^{\top})^{-1}.
\end{equation}
With sufficient trajectory data collected, the least-square estimate converges to the true gain, i.e., $\hat{\tilde{K}}_e=\tilde{K}_e$. Then Algorithm \ref{algorithm1} is proposed for model-based inverse RL implementation based on Theorem~\ref{t1}.

\begin{algorithm}[!t]
	\caption{A model-based inverse RL algorithm for indefinite MF social optimization.}\label{algorithm1}
	\begin{algorithmic}[1]
		\STATE \textbf{Initialization:} Give an arbitrary matrix $R$, and 
		select appropriate initial weight matrices $Q^0$ and $\Gamma^0$. Calculate $Q_{\Gamma}^0$ by $Q_{\Gamma}^0=(\Gamma^0)^{\top}Q^0+Q^0\Gamma^0-(\Gamma^0)^{\top}Q^0\Gamma^0$, and $\tilde{K}_e$ by \eqref{K_e}.
		\STATE \textbf{Loop 1:} Set $l=0$, and $\epsilon_q$ as a small threshold.
		\REPEAT
		\STATE \textbf{Policy evaluation:} Calculate $P^l$ by the following Lyapunov equation
		\vspace{-0.8em}
		\begin{align}\label{Pl}
			&(A-BK_e)^{\top}P^l+P^l(A-BK_e)+(C-DK_e)^{\top}\nonumber\\
			&\times P^l(C-DK_e)=-Q^{l}-K_e^{\top}RK_e.
		\end{align}
	    \vspace{-1.5em}
		\STATE \textbf{Policy improvement:} Solve for $K^{l}$ by
		\vspace{-0.5em}
		\begin{align}\label{Kl}
			K^{l}=(\Lambda^{l})^{-1}(B^{\top}P^l+D^{\top}P^lC),
		\end{align}
		where $\Lambda^{l}\triangleq R+D^{\top}P^lD$.
		\STATE \textbf{Weight improvement:} Update $Q^{l+1}$ by
		\vspace{-0.5em}
		\begin{align}\label{Ql}
			Q^{l+1}=-A^{\top}P^l-P^lA-C^{\top}P^lC+(K^{l})^{\top}\Lambda^{l}K^{l}.
		\end{align}
	    \vspace{-1.5em}
		\STATE Let $l=l+1$.
		\UNTIL{$\|Q^{l}-Q^{l-1}\|\le \epsilon_q$.}
		\STATE Let the final values $K^* = K^{l-1}$, $P^* = P^{l-1}$, $Q^* = Q^{l}$, and $\Lambda^* = R+D^{\top}P^*D$.
		
		\STATE \textbf{Loop 2:} Set $k=0$, and $\epsilon_{\gamma}$ as a small threshold. 
		\REPEAT
		\STATE \textbf{MF policy evaluation:} Calculate $S^{k}$ by the following Lyapunov equation
		\vspace{-0.8em}
		\begin{align}\label{Sl}
			&\big(A-B(K^*+\bar{K}_e)\big)^{\top}S^k+S^k\big(A-B(K^*+\bar{K}_e)\big)\nonumber\\
			&=-\bar{K}_e^{\top}\Lambda^*\bar{K}_e+Q_{\Gamma}^k.
		\end{align}
	    \vspace{-1.5em}
		\STATE \textbf{MF policy improvement:} Solve for $\bar{K}^{k}$ by
		\vspace{-0.5em}
		\begin{align}\label{barKl}
			\bar{K}^{k}= (\Lambda^*)^{-1}B^{\top}S^{k}.
		\end{align}
	    \vspace{-1.5em}
		\STATE \textbf{MF weight improvement:} Update $Q_{\Gamma}^{k+1}$ by
		\vspace{-0.5em}
		\begin{align}\label{Gammal}
			Q_{\Gamma}^{k+1}=&(A-BK^*)^{\top}S^{k}+S^{k}(A-BK^*)\cr
			&-(\bar{K}^{k})^{\top}\Lambda^*\bar{K}^{k}.
		\end{align} 
	    \vspace{-1.5em}
		\STATE Let $k=k+1$.
		\UNTIL{$\|Q_{\Gamma}^{k}-Q_{\Gamma}^{k-1}\|\le \epsilon_{\gamma}$.}
		\STATE Let final values $\bar{K}^* = \bar{K}^{k-1}$, $S^* = S^{k-1}$, and $Q_{\Gamma}^* = Q_{\Gamma}^{k}$.
		Moreover, $\Pi^*=P^*+S^*$, and the final policy for each learner agent $i$ is given by $u_{i}= -K^*x_{i}-\bar{K}^*\bar{x},\ i\in \mathcal{N}$.	
	\end{algorithmic}
\end{algorithm}
\begin{rem}
	Unlike standard RL, which optimizes policies for a given cost function, inverse RL recovers unknown indefinite cost weights from expert trajectories to replicate optimal behaviors. The proposed algorithm contains two sequential iterative loops, which separately handle individual and MF dynamics. Each iteration in Loops~1 and 2 consists of three steps: the first two steps follow the classic PI structure, while the third step performs additional indefinite weight updating. This constitutes the key distinction from conventional RL-based PI for MF social optimization in \cite{Xu2025}.
\end{rem}
\begin{rem}
	Although the cost weights are allowed to be unknown and indefinite, we fix $R$ and iterate only $Q$ and $Q_{\Gamma}$ in Algorithm~\ref{algorithm1}. On the one hand, fixing $R$ provides proper degrees of freedom for solving the related Lyapunov equations and SAREs, and ensures that $\Lambda>0$, which is needed for stability and the existence of solutions. On the other hand, this setting avoids trivial solutions in the learning process \citep{Johnson2013}.
\end{rem}
\begin{rem}
	By appropriately selecting the initial weight matrices $Q^0$ and $\Gamma^0$, the resulting initial gains $K^0$ and $\bar{K}^0$ can be guaranteed to be stabilizing. This initialization setting also ensures the positive definiteness of $\Lambda^0$, which is critical for the convergence and closed-loop stability of the subsequent iterations.
\end{rem}

\subsection{Analysis of Algorithm \ref{algorithm1}}\label{2.2}
In what follows, we prove the closed-loop stability under the learned policies and analyze the convergence of Algorithm~\ref{algorithm1}. Besides, it will be shown that $Q$, $Q_{\Gamma}$, $P$ and $S$ learned by Algorithm \ref{algorithm1} for $K_e$ and $\bar{K}_e$ may not be unique.

Based on Algorithm~\ref{algorithm1}, the final values $P^*$, $Q^*$, $K^*$, $S^*$, $Q_{\Gamma}^*$ and $\bar{K}^*$ are  solutions satisfying Theorem \ref{t1}, that is,
\begin{align}
	&A^{\top}P^*+P^*A+C^{\top}P^*C-(B^{\top}P^*+D^{\top}P^*C)^{\top}\nonumber\\
	&\times (\Lambda^*)^{-1}(B^{\top}P^*+D^{\top}P^*C)+Q^*=0,\label{P*}\\
	&(A-BK^*)^{\top}S^*+S^*(A-BK^*)-S^*B\nonumber\\
	&\times (\Lambda^*)^{-1}B^{\top}S^*-Q_{\Gamma}^*=0,\label{S*}\\
	&(A-BK_e)^{\top}P^*+P^*(A-BK_e)+Q^*+K_e^{\top}RK_e\nonumber\\
	&+(C-DK_e)^{\top}P^*(C-DK_e)=0\label{P*k},\\
	&\big(A-B(K^*+\bar{K}_e)\big)^{\top}S^{*}+S^{*}\big(A-B(K^*+\bar{K}_e)\big)\nonumber\\
	&-Q_{\Gamma}^{*}+(\bar{K}_e)^{\top}\Lambda^*\bar{K}_{e}=0,\label{S*k}\\
	&K^*=(\Lambda^*)^{-1}(B^{\top}P^*+D^{\top}P^*C),\label{K*}\\
	&\bar{K}^*=(\Lambda^*)^{-1}B^{\top}S^*,\label{barK*}
\end{align}
where $\Lambda^*\triangleq R+D^{\top}P^*D$. The following two theorems analyze the convergence of sequences $\{Q^{l},P^l,K^{l}\}_{l=0}^{\infty}$ and $\{Q_{\Gamma}^{k},S^k,\bar{K}^{k}\}_{k=0}^{\infty}$, respectively, and the stability of corresponding closed-loop systems.

\begin{thm}\label{convergence_P}
	Let Assumptions~\ref{a:initial_state}--\ref{a:lmi} hold. Give an arbitrary weight $R$, and an initial $Q^0$ that satisfies $Q^0\le Q^*$. In addition, $Q^0$ is chosen such that $\Lambda^0=R+D^{\top}P^0D>0$ and $K^0$ is a stabilizer. Then the sequence $\{Q^{l},P^l,K^{l}\}_{l=0}^{\infty}$ generated by Algorithm \ref{algorithm1} has the following properties for all $l\ge 0$. 
	\begin{enumerate}
		\item [(1)] $Q^{l}\le Q^{l+1}\le Q^*$, and $P^{l}\le P^{l+1}\le P^*$.
		\item [(2)] $A-BK^{l}$ is Hurwitz. 
		\item [(3)] $\lim_{l\to \infty} Q^l=Q^*$, $\lim_{l\to \infty} P^l=P^*$, and \\$\lim_{l\to \infty} K^l=K^*=K_e$.  
	\end{enumerate}  
\end{thm}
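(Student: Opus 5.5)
The plan is to run everything through the generalized Lyapunov operator
\[
\mathcal{L}_K(X)\triangleq (A-BK)^{\top}X+X(A-BK)+(C-DK)^{\top}X(C-DK),
\]
together with the shorthand $M(X)\triangleq B^{\top}X+D^{\top}XC$. With this notation, \eqref{Pl} reads $\mathcal{L}_{K_e}(P^l)=-Q^l-K_e^{\top}RK_e$, and \eqref{Kl} reads $\Lambda^lK^l=M(P^l)$.

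\textbf{Step 1: the one-step identity.} Expanding $\mathcal{L}_{K_e}(P^l)$, substituting $M(P^l)=\Lambda^lK^l$, and comparing with \eqref{Ql}, I expect the completion of squares
\begin{align*}
Q^{l+1}-Q^l=(K^l-K_e)^{\top}\Lambda^l(K^l-K_e).
\end{align*}
This is the same computation as in the proof of Theorem~\ref{t1}. It gives $Q^{l+1}\ge Q^l$ as long as $\Lambda^l>0$.

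\textbf{Step 2: monotonicity of $P^l$.} By Lemma~\ref{stabilizing}, $K_e$ is a mean-square stabilizer. Subtracting consecutive instances of \eqref{Pl} gives $\mathcal{L}_{K_e}(P^{l+1}-P^l)=-(Q^{l+1}-Q^l)\le 0$. Statement 3) of Lemma~\ref{le:stable} then yields $P^{l+1}\ge P^l$. The same argument with \eqref{P*k} gives $\mathcal{L}_{K_e}(P^*-P^l)=-(Q^*-Q^l)$, so $Q^l\le Q^*$ implies $P^l\le P^*$.

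\textbf{Step 3: the induction and its bottleneck.} All of part (1) is organized as an induction on $l$, with hypothesis $Q^l\le Q^*$, $\Lambda^l>0$, and $K^l$ a stabilizer. The base case is exactly the stated choice of $Q^0$. Two pieces propagate easily:
\begin{itemize}
\item $\Lambda^{l+1}=\Lambda^l+D^{\top}(P^{l+1}-P^l)D\ge\Lambda^l>0$, using Step 2.
\item $\Lambda^l\le\Lambda^*$, which follows from $P^l\le P^*$.
\end{itemize}
The upper bound $Q^{l+1}\le Q^*$ is the step I expect to be the main obstacle. The trouble is that the difference of the two Lyapunov relations only recovers the Step 1 identity, which is uninformative here.

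My first attempt will use the Riccati map $\mathcal{R}(X)=A^{\top}X+XA+C^{\top}XC-M(X)^{\top}(R+D^{\top}XD)^{-1}M(X)$. By \eqref{Ql} and \eqref{P*}, $Q^{l+1}=-\mathcal{R}(P^l)$ and $Q^*=-\mathcal{R}(P^*)$. On the region $\{R+D^{\top}XD>0\}$ it has the variational form
\[
\mathcal{R}(X)=\min_K\{\mathcal{L}_K(X)+K^{\top}RK\}.
\]
Plugging the suboptimal gain $K^l$ into $\mathcal{R}(P^*)$ gives
\[
Q^*-Q^{l+1}=\mathcal{R}(P^l)-\mathcal{R}(P^*)\ge -\mathcal{L}_{K^l}(P^*-P^l)+(K^l-K_e)^{\top}\Lambda^*(K^l-K_e)\text{-type remainder}.
\]
I would then try to close the argument using $P^*-P^l\ge 0$, together with the LMI interior point of Assumption~\ref{a:lmi} to control the sign of $\mathcal{L}_{K^l}(P^*-P^l)$. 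If this fails, I would fall back on making $Q^{l}\le Q^*$ part of a sharper induction hypothesis, namely comparing $P^l$ with the Riccati solution associated with $Q^l$.

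\textbf{Step 4: stability (part (2)).} Rewrite \eqref{Ql} as
\[
\mathcal{L}_{K^l}(P^l)=-Q^{l+1}-(K^l)^{\top}RK^l.
\]
Subtracting \eqref{P*k} written around $K^l$ should give an equation of the form
\[
\mathcal{L}_{K^l}(P^*-P^l)+(\text{positive semidefinite term})\le 0.
\]
The positive semidefinite term involves $(K^l-K_e)^{\top}\Lambda(K^l-K_e)$ and the unknown $Q$'s cancel. Feeding this into condition 2) of Lemma~\ref{le:stable}, perturbed to a strict inequality via the interior point of Assumption~\ref{a:lmi}, should show that $K^l$ is a mean-square stabilizer. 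Taking expectations in the closed loop then gives that $\mathbb{E}x$ obeys $\dot{\bar x}=(A-BK^l)\bar x$ with $\bar x\to0$ for every initial mean, so $A-BK^l$ is Hurwitz.

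\textbf{Step 5: convergence (part (3)).} The sequences $\{Q^l\}$ and $\{P^l\}$ are monotone and bounded by $Q^*$ and $P^*$, so they converge to limits $\bar Q$ and $\bar P$. Consequently $\Lambda^l\to\bar\Lambda\ge\Lambda^0>0$ and $K^l\to\bar K$. From Step 1, $(K^l-K_e)^{\top}\Lambda^l(K^l-K_e)=Q^{l+1}-Q^l\to0$, and $\Lambda^l\ge\Lambda^0>0$ forces $K^l\to K_e$. Passing to the limit in \eqref{Pl} and \eqref{Ql} shows that $(\bar Q,\bar P,K_e)$ satisfies \eqref{P*}, \eqref{P*k} and \eqref{K*}. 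Since $Q^*$ and $P^*$ are by definition the terminal values of this same iteration, and $P$ is uniquely determined by $Q$ via \eqref{Pl} because $K_e$ is a stabilizer, I identify $\bar Q=Q^*$ and $\bar P=P^*$, with $K^*=K_e$ by Theorem~\ref{t1}.
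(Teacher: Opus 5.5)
The genuine gap is Step 3: you never establish $Q^{l+1}\le Q^*$, and the tools you propose cannot do it. Your relation there is in fact an identity, $Q^*-Q^{l+1}=-\mathcal{L}_{K^l}(P^*-P^l)+(K^l-K_e)^{\top}\Lambda^*(K^l-K_e)$. This is exactly what the paper obtains by subtracting \eqref{KlPl} from \eqref{KlP*}. The paper then simply asserts $\mathcal{L}_{K^l}(P^*-P^l)\le 0$, citing Lemma~\ref{le:stable} and part (2). That lemma only infers the sign of $X$ from the sign of $\mathcal{L}_K(X)$, not conversely. Neither $P^*-P^l\ge0$, nor stability of $K^l$, nor the interior point of Assumption~\ref{a:lmi} fixes the sign of $\mathcal{L}_{K^l}(P^*-P^l)$.

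Your own Step 1 shows that the base case $Q^1\le Q^*$ is equivalent to $Q^*-Q^0\ge(K^0-K_e)^{\top}\Lambda^0(K^0-K_e)$, which $Q^0\le Q^*$ does not give. For instance, take $C=D=0$, $B=R=I_2$, $K_e=P^*=10I$, and $A=F+10I$ with $F=\begin{pmatrix}-1&0\\1&-1\end{pmatrix}$ (expert weights $Q_e=Q^*$, $R_e=I$). Set $Q^0=Q^*-\epsilon e_2e_2^{\top}$ with $e_2=(0,1)^{\top}$. Then $P^*-P^0=\epsilon E$ with $E=\frac14\begin{pmatrix}1&1\\1&2\end{pmatrix}$, $\Lambda^0=I$, and $K^0=10I-\epsilon E$ is stabilizing for small $\epsilon>0$. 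Yet $Q^1-Q^*=-\epsilon e_2e_2^{\top}+\epsilon^2E^2$ has $(1,1)$ entry $\epsilon^2/8>0$. Since $\{Q^l\}$ is nondecreasing, $Q^l\to Q^*$ is impossible in this example as well.

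So Step 3 needs an additional hypothesis. Reading $Q^*$ as the limit of the iteration makes the bound trivial by monotonicity, but then your Step 5 (and the paper's part (3)) becomes circular. Apart from this, your Steps 1--2, the propagation $\Lambda^{l+1}\ge\Lambda^l>0$, and the limit argument in Step 5 match the paper. Your deduction of $K^l\to K_e$ from $Q^{l+1}-Q^l\to0$ and $\Lambda^l\ge\Lambda^0>0$ is cleaner than the paper's version.

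Step 4 has a sign error, so it fails even if Step 3 is granted. The correct identity is
\begin{align*}
\mathcal{L}_{K^l}(P^*-P^l)=&\,(Q^l-Q^*)\\
&+(K^l-K_e)^{\top}(\Lambda^l+\Lambda^*)(K^l-K_e),
\end{align*}
so the $Q$'s do not cancel and the quadratic term lies on the wrong side. Hence $P^*-P^l$ is not a Lyapunov certificate for $K^l$: in the example, $\mathcal{L}_{K^0}(P^*-P^0)=-\epsilon e_2e_2^{\top}+2\epsilon^2E^2$ is indefinite although $K^0$ is stabilizing.

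The idea you are missing, which the paper uses, is to certify $K^{l+1}$ by the increment $P^{l+1}-P^l\ge0$. Write \eqref{Pl} at $l+1$ around $K^l$, subtract \eqref{KlPl}, and then switch the gain from $K^l$ to $K^{l+1}$. This gives
\begin{align*}
\mathcal{L}_{K^{l+1}}(P^{l+1}-P^l)=&-(K_e-K^{l+1})^{\top}\Lambda^{l+1}(K_e-K^{l+1})\\
&-(K^{l+1}-K^l)^{\top}\Lambda^{l}(K^{l+1}-K^l),
\end{align*}
which uses no upper bound. Both sides are only semidefinite, so Lemma~\ref{le:stable} does not apply verbatim (the paper glosses over this). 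An eigenvector argument closes part (2): if $(A-BK^{l+1})v=\lambda v$ with $\mathrm{Re}\,\lambda\ge0$, then $v^*\mathcal{L}_{K^{l+1}}(P^{l+1}-P^l)v\ge0$ forces $(K^{l+1}-K_e)v=0$. Then $\lambda$ would be an eigenvalue of the Hurwitz matrix $A-BK_e$, a contradiction.
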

\begin{pf}
	First, subtracting \eqref{Pl} from \eqref{Ql} obtains
	\begin{align}\label{Ql+1-Ql}
		Q^{l+1}-Q^{l}=&K_e^{\top}\Lambda^lK_e+(K^{l})^{\top}\Lambda^lK^{l}\cr
		&-K_e^{\top}(B^{\top}P^l+D^{\top}P^lC)\cr
		&-(B^{\top}P^l+D^{\top}P^lC)^{\top}K_e.
	\end{align}
	Since $\Lambda^lK^{l}=B^{\top}P^l+D^{\top}P^lC$ from \eqref{Kl}, equation \eqref{Ql+1-Ql} can be transformed into 
	\begin{align}\label{Ql+1l}
		Q^{l+1}-Q^{l}=(K_e-K^{l})^{\top}\Lambda^l(K_e-K^{l}).
	\end{align}
	Then subtracting \eqref{Pl} at iteration $l$ from that at iteration $l+1$ gives
	\begin{align}\label{Pl+1Pl}
		&(A-BK_e)^{\top}(P^{l+1}-P^l)+(P^{l+1}-P^l)(A-BK_e)\nonumber\\
		&+(C-DK_e)^{\top}(P^{l+1}-P^l)(C-DK_e)\nonumber\\
		=&\ Q^{l}-Q^{l+1},
	\end{align}
	When $l=0$, we ensure that $\Lambda^0=R+D^{\top}P^0D>0$, such that $Q^0\le Q^1$ by \eqref{Ql+1l}. Since $K_e$ is the stabilizer of expert agents \eqref{x_e}, based on Lemma~\ref{le:stable}, \eqref{Pl+1Pl} implies that $P^0\le P^1$, which can deduce that $0<\Lambda^0\le \Lambda^1$. Thus, by forward recursion, $\Lambda^l=R+D^{\top}P^lD>0$ for all iterations, which satisfies Assumption~\ref{a:lmi}. From the above analysis, $\{Q^l\}_{l=0}^{\infty}$ and $\{P^l\}_{l=0}^{\infty}$ are increasing sequences. Based on \eqref{Ql+1l}, $Q^l= Q^{l+1}$ if and only if $K^l=K_e$. 
	
	Next, we will analyze the stability and prove the result (2) in two cases.\\
	\noindent {\bf(i) The standard case}, where $Q^0\ge 0$ and $R>0$. In fact, this case implies that $Q^{l}\ge 0$ for all $l\ge 0$. Then, since $K_e$ is the stabilizer of expert agents \eqref{x_e}, $P^l>0$ is the unique solution to Lyapunov recursion \eqref{Pl} by Lemma~\ref{le:stable}. With \eqref{Pl} and \eqref{Kl}, we notice that
	\begin{align}\label{Kle}
		&(A-BK^{l})^{\top}P^l+P^l(A-BK^{l})\cr
		&+(C-DK^{l})^{\top}P^l(C-DK^{l})\cr
		=&(A-BK_e)^{\top}P^l+P^l(A-BK_e)-K_e^{\top}D^{\top}P^lDK_e\cr
		&+(C-DK_e)^{\top}P^l(C-DK_e)+K_e^{\top}\Lambda^lK^{l}\cr
		&+(K^{l})^{\top}\Lambda^lK_e-(K^{l})^{\top}\Lambda^lK^{l}-(K^{l})^{\top}RK^{l}\cr
		=&-Q^{l}-(K_e-K^{l})^{\top}\Lambda^l(K_e-K^{l})-(K^{l})^{\top}RK^{l},
	\end{align}
	Since $Q^{l}\ge 0$, $\Lambda^l > 0$ and $R>0$, then $(A-BK^{l})^{\top}P^l+P^l(A-BK^{l})+(C-DK^{l})^{\top}P^l(C-DK^{l})<0$. Based on Lemma~\ref{le:stable}, $K^{l}$ is a stabilizer of learner agents for all $l\ge 0$.
	\\ {\bf(ii) The general case}, where $Q^0$ and $R$ are indefinite matrices. Similar to obtaining \eqref{Kle}, we have 
	\begin{align}\label{KlPl+1}
		&(A-BK^l)^{\top}P^{l+1}+P^{l+1}(A-BK^l)\nonumber\\
		&+(C-DK^l)^{\top}P^{l+1}(C-DK^l)\nonumber\\
		=&-Q^{l+1}-(K_e-K^{l+1})^{\top}\Lambda^{l+1}(K_e-K^{l+1})\nonumber\\
		&+(K^{l+1}-K^{l})^{\top}\Lambda^{l+1}(K^{l+1}-K^{l})-(K^l)^{\top}RK^l.
	\end{align}
	By \eqref{Kl}, the SARE \eqref{Ql} is reformulated as 
	\begin{align}\label{KlPl}
		&(A-BK^l)^{\top}P^l+P^l(A-BK^l)\nonumber\\
		&+(C-DK^l)^{\top}P^l(C-DK^l)\nonumber\\
		=&-Q^{l+1}-(K^l)^{\top}RK^l.
	\end{align}
	Then subtracting \eqref{KlPl} from \eqref{KlPl+1} yields
	\begin{align}\label{KlPl+1Pl}
		&(A-BK^l)^{\top}(P^{l+1}-P^l)+(P^{l+1}-P^l)(A-BK^l)\nonumber\\
		&+(C-DK^l)^{\top}(P^{l+1}-P^l)(C-DK^l)\cr
		=&-(K_e-K^{l+1})^{\top}\Lambda^{l+1}(K_e-K^{l+1})\nonumber\\
		&+(K^{l+1}-K^{l})^{\top}\Lambda^{l+1}(K^{l+1}-K^{l}).
	\end{align}
	Equation \eqref{KlPl+1Pl} can be transformed into 	
    \begin{align}\label{Kl+1Pl+1Pl}
		&(A\!-\!BK^{l+1})^{\top}\!(P^{l+1}\!-\!P^l)+(P^{l+1}\!-\!P^l)(A-BK^{l+1})\nonumber\\
		&+(C-DK^{l+1})^{\top}(P^{l+1}-P^l)(C-DK^{l+1})\nonumber\\
		=&-(K_e-K^{l+1})^{\top}\Lambda^{l+1}(K_e-K^{l+1})\nonumber\\
		&-(K^{l+1}-K^{l})^{\top}\Lambda^{l}(K^{l+1}-K^{l}).
	\end{align}
	Since $P^{l+1}-P^l\ge 0$ and $\Lambda^{l+1}\ge\Lambda^l> 0$, by Lemma~\ref{le:stable}, $K^{l+1}$ is a stabilizer for all $l\ge 0$. Note that the initial gain $K^0$ can be obtained as a stabilizer by choosing a suitable initial weight $Q^0$. Combining the above analysis, it follows that $A-BK^{l}$ is Hurwitz for all $l\ge 0$. 
	
	Then, we use mathematical induction to show that $Q^l$ and $P^l$ are bounded above. When $l=0$, note that $Q^0\le Q^*$. Suppose $Q^l\le Q^*$ for $l\ge 0$, we subtract \eqref{Pl} from \eqref{P*k} to obtain
	\begin{align}\label{P*l}
		&(A-BK_e)^{\top}(P^*-P^l)+(P^*-P^l)(A-BK_e)\nonumber\\
		&+(C-DK_e)^{\top}(P^*-P^l)(C-DK_e)\nonumber\\
		=&\ Q^l-Q^*\le 0.
	\end{align}
	Based on Lemma~\ref{le:stable}, $P^l\le P^*$ for all iterations since $K_e$ is the stabilizer of expert agents. By \eqref{Kl}, the SARE \eqref{P*} is reformulated as 
	\begin{align}\label{KlP*}
		&(A-BK^l)^{\top}P^*+P^*(A-BK^l)\nonumber\\
		&+(C-DK^l)^{\top}P^*(C-DK^l)\cr
		=&-Q^*\!-\!(K^l)^{\top}\!RK^l \!+\!(K^*-K^l)^{\top}\!\Lambda^*(K^*-K^l).
	\end{align}
	Then based on Lemma~\ref{le:stable} and result (2), subtracting \eqref{KlPl} from \eqref{KlP*} yields 
	\begin{align*}
		&(A-BK^l)^{\top}(P^*-P^l)+(P^*-P^l)(A-BK^l)\nonumber\\
		&+(C-DK^l)^{\top}(P^*-P^l)(C-DK^l)\cr
		=&Q^{l+1}-Q^*+(K^*-K^l)^{\top}\Lambda^*(K^*-K^l)\le 0
	\end{align*}
	for $l\ge 0$. Therefore, the fact that $Q^{l+1}\le Q^*$ holds. From the above analysis, result (1) is established. 
	
	Next, we show that result (3) also holds. By \eqref{Pl}, \eqref{Kl} and \eqref{Ql}, the final values $P^*$, $Q^*$ and $K^*$ satisfy \eqref{P*}, \eqref{P*k} and \eqref{K*}. Thus $\lim_{l\to \infty} Q^l=Q^*$, $\lim_{l\to \infty} P^l=P^*$, and $\lim_{l\to \infty} K^l=K^*$. At the $(l+1)$th iteration, substituting \eqref{Ql} into \eqref{Pl}, we get
	\begin{align*}
		&A^{\top}P^{l+1}+P^{l+1}A+C^{\top}P^{l+1}C-K_e^{\top}(B^{\top}P^{l+1}\nonumber\\
		&+D^{\top}P^{l+1}C)-(B^{\top}P^{l+1}+D^{\top}P^{l+1}C)^{\top}K_e\nonumber\\
		&+K_e^{\top}(R+D^{\top}P^{l+1}D)K_e\nonumber\\
		=&\ A^{\top}P^{l}+P^{l}A+C^{\top}P^{l}C-(K^l)^{\top}(R+D^{\top}P^{l}D)K^l.
	\end{align*}
	When $\{P^l\}_{l=0}^{\infty}$ converges, $P^l=P^{l+1}\to P^*$, then the above equation becomes
	\begin{align*}
		(K_e-K^*)^{\top}\Lambda^*(K_e-K^*)=0.
	\end{align*}
	Since $\Lambda^*>0$, it can be deduced that $K^*=K_e$. This completes the proof. \hfill $\square$
\end{pf}

\begin{thm}\label{convergence_S}
	Adopt the setting and results of Theorem~\ref{convergence_P}. In addition, we select an initial weight $\Gamma^0$ such that $Q_{\Gamma}^0\ge Q_{\Gamma}^*$, and $A-B(K^*+\bar{K}^{0})$ is Hurwitz. Then the sequence $\{Q_{\Gamma}^{k},S^k,\bar{K}^{k}\}_{k=0}^{\infty}$ generated by Algorithm \ref{algorithm1} has the following properties for all $k\ge 0$.
	\begin{enumerate}
		\item [(1)] $Q_{\Gamma}^{k}\ge Q_{\Gamma}^{k+1}\ge Q_{\Gamma}^*$, and $S^{k}\le S^{k+1}\le S^*$.
		\item [(2)] $A-B(K^*+\bar{K}^{k})$ is Hurwitz. 
		\item [(3)] $\lim_{k\to \infty} Q_{\Gamma}^k=Q_{\Gamma}^*$, $\lim_{k\to \infty} S^k=S^*$, and \\ $\lim_{k\to \infty} \bar{K}^k=\bar{K}^*=\bar{K}_e$.
	\end{enumerate}  
\end{thm}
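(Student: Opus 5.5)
The plan mirrors the proof of Theorem~\ref{convergence_P}, but Loop~2 is a deterministic problem. By Theorem~\ref{convergence_P} we have $K^*=K_e$, so I write $A_K\triangleq A-BK^*=A-BK_e$ and $\Lambda^*=R+D^{\top}P^*D>0$. Every Lyapunov equation in Loop~2 is then a plain continuous-time Lyapunov equation. Lemma~\ref{le:stable} with $C=D=0$ applies: if $M$ is Hurwitz and $M^{\top}X+XM=-Y$ with $Y\ge0$, then $X\ge 0$, and the solution is unique. Moreover, $A_K-B\bar K_e=A-B(K_e+\bar K_e)$ is Hurwitz by Lemma~\ref{stabilizing}. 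So Loop~2 has the same fixed stabilizing matrix role that $K_e$ played in Loop~1.

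\textbf{Monotonicity.} Subtract \eqref{Sl} from \eqref{Gammal} and use $B^{\top}S^k=\Lambda^*\bar K^k$ from \eqref{barKl}. This gives
\begin{align*}
Q_\Gamma^{k+1}-Q_\Gamma^{k}=-(\bar K_e-\bar K^k)^{\top}\Lambda^*(\bar K_e-\bar K^k)\le 0 ,
\end{align*}
so $\{Q_\Gamma^k\}$ is nonincreasing. Next, difference \eqref{Sl} at $k+1$ and $k$:
\begin{align*}
(A_K-B\bar K_e)^{\top}(S^{k+1}-S^k)+(S^{k+1}-S^k)(A_K-B\bar K_e)=Q_\Gamma^{k+1}-Q_\Gamma^k\le 0 .
\end{align*}
Since $A_K-B\bar K_e$ is Hurwitz, this yields $S^{k}\le S^{k+1}$. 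Similarly, subtracting \eqref{Sl} from \eqref{S*k} gives
\begin{align*}
(A_K-B\bar K_e)^{\top}(S^*-S^k)+(S^*-S^k)(A_K-B\bar K_e)=Q_\Gamma^*-Q_\Gamma^k ,
\end{align*}
so $Q_\Gamma^k\ge Q_\Gamma^*$ implies $S^k\le S^*$. The lower bound $Q_\Gamma^{k+1}\ge Q_\Gamma^*$ is obtained by induction from $Q_\Gamma^0\ge Q_\Gamma^*$. I rewrite \eqref{S*} along the closed loop $A_K-B\bar K^k$ and subtract \eqref{Gammal}, rewritten as
\begin{align*}
(A_K-B\bar K^k)^{\top}S^k+S^k(A_K-B\bar K^k)=Q_\Gamma^{k+1}-(\bar K^k)^{\top}\Lambda^*\bar K^k .
\end{align*}
This gives a Lyapunov identity for $S^*-S^k$ in terms of $Q_\Gamma^*-Q_\Gamma^{k+1}$ and $(\bar K^*-\bar K^k)^{\top}\Lambda^*(\bar K^*-\bar K^k)$. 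Together with $S^*-S^k\ge 0$ and Hurwitzness of $A_K-B\bar K^k$, I would use it to close the induction.

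\textbf{Stability (2).} This is analogous to \eqref{KlPl+1}--\eqref{Kl+1Pl+1Pl}. Expand the $(k{+}1)$-th version of \eqref{Gammal} along $A_K-B\bar K^k$, subtract the $k$-th one, and shift the closed-loop matrix to $A_K-B\bar K^{k+1}$. The result is
\begin{align*}
&(A_K-B\bar K^{k+1})^{\top}(S^{k+1}-S^k)+(S^{k+1}-S^k)(A_K-B\bar K^{k+1})\\
&=-(\bar K_e-\bar K^{k+1})^{\top}\Lambda^*(\bar K_e-\bar K^{k+1})-(\bar K^{k+1}-\bar K^k)^{\top}\Lambda^*(\bar K^{k+1}-\bar K^k).
\end{align*}
Here $S^{k+1}-S^k\ge 0$ and the right side is $\le 0$. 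Together with the Hurwitz hypothesis on $A-B(K^*+\bar K^0)$, induction gives that $A-B(K^*+\bar K^k)$ is Hurwitz for all $k$.

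\textbf{Convergence (3).} The sequence $\{S^k\}$ is monotone and bounded by $S^*$, and $\{Q_\Gamma^k\}$ is monotone and bounded by $Q_\Gamma^*$, so both converge. Passing to the limit in the difference identity for $Q_\Gamma$ gives $(\bar K_e-\bar K^\infty)^{\top}\Lambda^*(\bar K_e-\bar K^\infty)=0$. Since $\Lambda^*>0$, this yields $\bar K^\infty=\bar K_e$. The limits then satisfy \eqref{S*}, \eqref{S*k} and \eqref{barK*}. Uniqueness of the Lyapunov solution for the Hurwitz matrix $A_K-B\bar K_e$ identifies $S^\infty=S^*$, and hence $Q_\Gamma^\infty=Q_\Gamma^*$.

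\textbf{Main obstacle.} I expect two weak points. The first is closing the induction for $Q_\Gamma^{k+1}\ge Q_\Gamma^*$, because the quadratic term $(\bar K^*-\bar K^k)^{\top}\Lambda^*(\bar K^*-\bar K^k)$ enters with the wrong sign for a direct Lyapunov argument. The second is the stability step, where the right side above is only semidefinite, and a Lyapunov argument with a semidefinite forcing term does not by itself give Hurwitzness. For the first, I would try writing $Q_\Gamma^{k+1}-Q_\Gamma^*$ directly from \eqref{Gammal} and \eqref{S*k} as a quadratic in $S^*-S^k$. For the second, I would try an observability-type argument, or fall back on the strictness $\Lambda^*>0$ whenever $\bar K^{k+1}\neq\bar K_e$.
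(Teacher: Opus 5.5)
Your outline follows the paper's proof essentially line by line: the identity $Q_\Gamma^{k+1}-Q_\Gamma^k=-(\bar K_e-\bar K^k)^\top\Lambda^*(\bar K_e-\bar K^k)$, the Lyapunov differences along the Hurwitz matrix $A-B(K_e+\bar K_e)$, the pair \eqref{KlS*}/\eqref{KlPil}, and \eqref{Kl+1Pil+1Pil}. However, the two steps you flag are left open, so this is not yet a proof. Your diagnosis of both is correct, and the paper does not close them either. It simply asserts $(A-B(K^*+\bar K^k))^\top(S^*-S^k)+(S^*-S^k)(A-B(K^*+\bar K^k))\le 0$, which does not follow from $S^*-S^k\ge 0$ and Hurwitzness. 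It also applies Lemma~\ref{le:stable} when both $S^{k+1}-S^k$ and the right-hand side are only semidefinite.

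The stability step can be repaired along the observability line you suggest. Let $M_k=A-B(K^*+\bar K^k)$ and $\Delta_k=S^{k+1}-S^k$. You have $\Delta_k\ge 0$ unconditionally, since it uses only $Q_\Gamma^{k+1}\le Q_\Gamma^k$ and Lemma~\ref{stabilizing}. Also $M_{k+1}=M_k-B(\bar K^{k+1}-\bar K^k)$. Suppose $M_{k+1}w=\mu w$ with $w\in\mathbb{C}^n\setminus\{0\}$ and $\mathrm{Re}\,\mu\ge 0$. Applying $w^*(\cdot)w$ to the identity in your stability step gives $2\,\mathrm{Re}(\mu)\,w^*\Delta_k w\ge 0$ on the left and a nonpositive number on the right, so both vanish. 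In particular $w^*(\bar K^{k+1}-\bar K^k)^\top\Lambda^*(\bar K^{k+1}-\bar K^k)w=0$, so $(\bar K^{k+1}-\bar K^k)w=0$. Hence $M_kw=\mu w$, contradicting that $M_k$ is Hurwitz, and (2) follows by induction from the hypothesis on $M_0$.

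The lower bound, however, cannot be proved, because under the stated hypotheses it is false. With $X_k=S^*-S^k$, your proposed rewriting gives exactly $Q_\Gamma^{k+1}-Q_\Gamma^*=(Q_\Gamma^k-Q_\Gamma^*)-X_kB(\Lambda^*)^{-1}B^\top X_k$, and nothing forces this to be $\ge 0$. Here is a counterexample:
\begin{itemize}
\item Take $C=D=0$, $Q_e=0$, $\Gamma_e=0$, $R=R_e=I_2$, $A=\begin{pmatrix}-3/2&11/4\\-1/4&-3/2\end{pmatrix}$ (Hurwitz), and an invertible $B$ with $BB^\top=\begin{pmatrix}1/2&1/4\\1/4&1/2\end{pmatrix}$.
\item Then $K_e=\bar K_e=0$, Loop~1 hands over $K^*=0$ and $\Lambda^*=I_2$, and because $B$ is invertible, $(S^*,Q_\Gamma^*)=(0,0)$ is the only solution of \eqref{S*}--\eqref{barK*}.
\item Choose $Q_\Gamma^0=-(A^\top+A)=\begin{pmatrix}3&-5/2\\-5/2&3\end{pmatrix}>0$. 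Then $S^0=-I$ and $A-B(K^*+\bar K^0)=A+BB^\top=\begin{pmatrix}-1&3\\0&-1\end{pmatrix}$ is Hurwitz, so the hypotheses hold.
\item But $Q_\Gamma^1=Q_\Gamma^0-BB^\top$ satisfies $Q_\Gamma^1v=-\tfrac14 v$ for $v=(1,1)^\top$.
\end{itemize}
By monotonicity, $v^\top Q_\Gamma^k v\le-\tfrac12$ for every $k\ge 1$. Any limit of the iterates would solve \eqref{S*}--\eqref{barK*} and hence equal $(0,0)$. So the iterates do not converge, and (1) and (3) both fail.

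There is a separate problem when $m<n$: your identification $S^\infty=S^*$ via uniqueness of the Lyapunov solution is circular. The solutions of \eqref{S*}--\eqref{barK*} form the affine family $\{S=S^\top: B^\top S=\Lambda^*\bar K_e\}$, and \eqref{S*k} determines $S$ only once $Q_\Gamma$ is fixed. What your argument does establish is monotonicity of $\{Q_\Gamma^k\}$ and $\{S^k\}$, Hurwitzness of every $M_k$, and $\bar K^k\to\bar K_e$ whenever the iterates converge. Boundedness needs an additional hypothesis.
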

\begin{pf}
	First, subtracting \eqref{Sl} from \eqref{Gammal}, and substituting $\Lambda^*\bar{K}^{k}=B^{\top}S^k$ from \eqref{barKl} into the resulting difference yield
	\begin{align}\label{Gammal+1l}
		Q_{\Gamma}^{k+1}-Q_{\Gamma}^{k}=-(\bar{K}_e-\bar{K}^{k})^{\top}\Lambda^*(\bar{K}_e-\bar{K}^{k}).
	\end{align}
	From the proof of Theorem~\ref{convergence_P}, $\Lambda^*>0$, which implies that $Q_{\Gamma}^{k}\ge Q_{\Gamma}^{k+1}$, and $Q_{\Gamma}^{k}=Q_{\Gamma}^{k+1}$ if and only if $\bar{K}^k=\bar{K}_e$ based on \eqref{Gammal+1l}. Then subtract \eqref{Sl} at iteration $k$ from that at iteration $k+1$ to obtain
	\begin{align}\label{Pil+1l}
		&\big(A-B(K^*+\bar{K}_e)\big)^{\top}(S^{k+1}-S^k)\nonumber\\
		&+(S^{k+1}-S^k)\big(A-B(K^*+\bar{K}_e)\big)\nonumber\\
		=&\ Q_{\Gamma}^{k+1}-Q_{\Gamma}^{k}\le 0.
	\end{align}
	Note that $K^*=K_e$ from Theorem~\ref{convergence_P}, and $A-B(K_e+\bar{K}_e)$ is Hurwitz from Lemma~\ref{stabilizing}. Thus based on Lemma~\ref{le:stable}, $S^{k+1}\ge S^k$ from \eqref{Pil+1l}.
	
	Next, we will show that $A-B(K^*+\bar{K}^{k})$ is Hurwitz for all $k\ge 0$. According to \eqref{Sl} for iteration $k+1$, one has 
	\begin{align}\label{KlPil+1}
		&\big(A-B(K^*+\bar{K}^k)\big)^{\top}S^{k+1}+S^{k+1}\big(A-B(K^*+\bar{K}^k)\big)\nonumber\\
		=&\big(A-B(K^*+\bar{K}_e)\big)^{\top}S^{k+1}+S^{k+1}\big(A-B(K^*+\bar{K}_e)\big)\nonumber\\
		&+\bar{K}_e^{\top}\Lambda^*\bar{K}^{k+1}+(\bar{K}^{k+1})^{\top}\Lambda^*\bar{K}_e-(\bar{K}^{k})^{\top}\Lambda^*\bar{K}^{k+1}\nonumber\\
		&-(\bar{K}^{k+1})^{\top}\Lambda^*\bar{K}^{k}\nonumber\\
		=&\ Q_{\Gamma}^{k+1}-(\bar{K}_e-\bar{K}^{k+1})^{\top}\Lambda^*(\bar{K}_e-\bar{K}^{k+1})\nonumber\\
		&+(\bar{K}^{k+1}-\bar{K}^{k})^{\top}\Lambda^*(\bar{K}^{k+1}-\bar{K}^{k})-(\bar{K}^{k})^{\top}\Lambda^*\bar{K}^{k}.
	\end{align}
	By \eqref{barKl}, equation \eqref{Gammal} can be transformed into
	\begin{align}\label{KlPil}
		&\big(A-B(K^*+\bar{K}^k)\big)^{\top}S^{k}+S^{k}\big(A-B(K^*+\bar{K}^k)\big)\nonumber\\
		=&\ Q_{\Gamma}^{k+1}-(\bar{K}^{k})^{\top}\Lambda^*\bar{K}^{k}.
	\end{align}
	Then subtracting \eqref{KlPil} from \eqref{KlPil+1} yields
	\begin{align}\label{KlPil+1Pil}
		&\big(A-B(K^*+\bar{K}^k)\big)^{\top}(S^{k+1}-S^k)\nonumber\\
		&+(S^{k+1}-S^k)\big(A-B(K^*+\bar{K}^k)\big)\cr
		=&-(\bar{K}_e-\bar{K}^{k+1})^{\top}\Lambda^*(\bar{K}_e-\bar{K}^{k+1})\nonumber\\
		&+(\bar{K}^{k+1}-\bar{K}^{k})^{\top}\Lambda^*(\bar{K}^{k+1}-\bar{K}^{k}).
	\end{align}
	Using \eqref{KlPil+1Pil} leads to
	\begin{align}\label{Kl+1Pil+1Pil}
		&\big(A-B(K^*+\bar{K}^{k+1})\big)^{\top}(S^{k+1}-S^k)\nonumber\\
		&+(S^{k+1}-S^k)\big(A-B(K^*+\bar{K}^{k+1})\big)\cr
		=&-(\bar{K}_e-\bar{K}^{k+1})^{\top}\Lambda^*(\bar{K}_e-\bar{K}^{k+1})\nonumber\\
		&-(\bar{K}^{k+1}-\bar{K}^{k})^{\top}\Lambda^*(\bar{K}^{k+1}-\bar{K}^{k}).
	\end{align}
	Since $S^{k+1}-S^k\ge 0$ and $\Lambda^*> 0$, based on Lemma~\ref{le:stable}, $\bar{K}^{k+1}$ are the stabilizers for all $k\ge0$. Provided that an initial weight $\Gamma^0$ is chosen to make $A-B(K^*+\bar{K}^{0})$ Hurwitz, result (2) holds.
	
	We next prove the boundedness of the sequences $\{Q_{\Gamma}^{k}\}_{k=0}^{\infty}$ and $\{S^k\}_{k=0}^{\infty}$ in result (1) by mathematical induction. Select the initial weight $Q_{\Gamma}^0\ge Q_{\Gamma}^*$. Suppose that $Q_{\Gamma}^k \ge Q_{\Gamma}^*$ for $k\ge 0$, then subtract \eqref{Sl} from \eqref{S*k} to obtain
	\begin{align}\label{S*Sk}
		&\big(A-B(K^*+\bar{K}_e)\big)^{\top}(S^{*}-S^k)+(S^{*}-S^k)\big(A\nonumber\\
		&-B(K^*+\bar{K}_e)\big)=Q_{\Gamma}^{*}-Q_{\Gamma}^{k}\le 0,
	\end{align}
	which has the same form as \eqref{Pil+1l}. Thus it can be deduced that $S^{k}\le S^*$ for all iterations based on Lemma~\ref{le:stable}. By \eqref{barKl}, the SARE \eqref{S*} is reformulated as 
	\begin{align}\label{KlS*}
		&\big(A-B(K^*+\bar{K}^k)\big)^{\top}S^{*}+S^{*}\big(A-B(K^*+\bar{K}^k)\big)\nonumber\\
		=&\; Q_{\Gamma}^{*}-(\bar{K}^{k})^{\top}\!\Lambda^*\bar{K}^{k}\!+\!(\bar{K}^{*}\!-\bar{K}^{k})^{\top}\!\Lambda^*(\bar{K}^{*}\!-\bar{K}^{k}).
	\end{align}
	By Lemma~\ref{le:stable} and result (2), subtracting \eqref{KlPil} from \eqref{KlS*} yields 
	\begin{align*}
		&\big(A-B(K^*+\bar{K}^k)\big)^{\top}(S^{*}-S^k)\nonumber\\
		&+(S^{*}-S^k)\big(A-B(K^*+\bar{K}^k)\big)\nonumber\\
		=&\ Q_{\Gamma}^{*}-Q_{\Gamma}^{k+1}+(\bar{K}^{*}-\bar{K}^{k})^{\top}\Lambda^*(\bar{K}^{*}-\bar{K}^{k})\le 0
	\end{align*}
	for $k\ge 0$. Hence, $Q_{\Gamma}^{k+1}\ge Q_{\Gamma}^{*}$ holds, and result (1) is established. 
	
	Finally, we show that result (3) holds. From \eqref{Sl}, \eqref{barKl} and \eqref{Gammal}, the final values $S^*$, $Q_{\Gamma}^*$ and $\bar{K}^*$ satisfy \eqref{S*}, \eqref{S*k} and \eqref{barK*}, such that $\lim_{k\to \infty} Q_{\Gamma}^k=Q_{\Gamma}^*$, $\lim_{k\to \infty} S^k=S^*$, and $\lim_{k\to \infty} \bar{K}^k=\bar{K}^*$ hold. At the $(k+1)$th iteration, substituting \eqref{Gammal} into \eqref{Sl} yields
	\begin{align*}
		&(A-BK^*)^{\top}S^{k+1}+S^{k+1}(A-BK^*)\nonumber\\
		&-\bar{K}_e^{\top}B^{\top}S^{k+1}-S^{k+1}B\bar{K}_e\nonumber\\
		=&(A-BK^*)^{\top}S^{k}+S^{k}(A-BK^*)\nonumber\\
		&-(\bar{K}^k)^{\top}\Lambda^*\bar{K}^k-(\bar{K}_e)^{\top}\Lambda^*\bar{K}_e.
	\end{align*}
	As $k\to \infty$, $S^k=S^{k+1}\to S^*$, then the above equation becomes
	\begin{align*}
		(\bar{K}_e-\bar{K}^*)^{\top}\Lambda^*(\bar{K}_e-\bar{K}^*)=0.
	\end{align*}
	Since $\Lambda^*>0$, we derive that $\bar{K}^*=\bar{K}_e$. This completes the proof. \hfill $\square$
\end{pf}

The following theorem will discuss the connection between $R_e$, $Q_e$, $Q_{\Gamma_e}$, $P_e$, $S_e$ and $R$, $Q^*$, $Q_{\Gamma}^*$, $P^*$, $S^*$, and illustrate the non-uniqueness of the solution set.

\begin{thm}[Non-Uniqueness of Solutions]\label{nonunique}
	Given the matrix $R$ adopted in Algorithm~\ref{algorithm1}, let the matrices $R_e$, $Q_e$, $Q_{\Gamma_e}$, $P_e$, and $S_e$ used for the expert agents satisfy \eqref{P_e}, \eqref{u_e}, and \eqref{S_e}. If there exist symmetric matrices $Q_o$, $Q_{\Gamma_o}$, $P_o$, and $S_o$ satisfying
	\begin{align}
		&B^{\top}P_{o}+D^{\top}P_{o}C=(R_o+D^{\top}P_oD)\Lambda_e^{-1}(B^{\top}P_{e}\nonumber\\
		&+D^{\top}P_{e}C),\label{BPo}\\
		&B^{\top}S_{o}=(R_o+D^{\top}P_oD)\Lambda_e^{-1}B^{\top}S_{e}, \label{BSo}\\
		&A^{\top}P_{o}+P_{o}A+C^{\top}P_{o}C+Q_o\nonumber\\
		&-K_e^{\top}(R_o+D^{\top}P_oD)K_e=0\label{Po},\\
		&(A-BK_e)^{\top}S_{o}+S_{o}(A-BK_e)-Q_{\Gamma_o}\nonumber\\
		&-\bar{K}_e^{\top}(R_o+D^{\top}P_oD)\bar{K}_e=0\label{So}.
	\end{align}
	where $R_o=R-R_e$, and $K_e$, $\bar{K}_e$ are given from \eqref{u_e}. Then, any converged solutions $Q^*=Q_e+Q_o$, $Q_{\Gamma}^*=Q_{\Gamma_e}+Q_{\Gamma_o}$, $P^*=P_e+P_o$, and $S^*=S_e+S_o$ from Algorithm~\ref{algorithm1} satisfy \eqref{P*} and \eqref{S*}, and give $K_e$ and $\bar{K}_e$ by \eqref{K*} and \eqref{barK*}.
\end{thm}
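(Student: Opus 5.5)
Because every relation in the statement is affine in the unknowns once $K_e$ and $\bar K_e$ are fixed, the proof is a direct verification by superposition. Set $P^*=P_e+P_o$, $S^*=S_e+S_o$, $Q^*=Q_e+Q_o$, $Q_\Gamma^*=Q_{\Gamma_e}+Q_{\Gamma_o}$, and $R=R_e+R_o$. The key observation is that
\[
\Lambda^*=R+D^{\top}P^*D=(R_e+D^{\top}P_eD)+(R_o+D^{\top}P_oD)=\Lambda_e+\Lambda_o,
\]
where $\Lambda_o\triangleq R_o+D^{\top}P_oD$. I will first establish the gain identities \eqref{K*} and \eqref{barK*}. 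Then I will use them to turn the two Riccati equations \eqref{P*} and \eqref{S*} into Lyapunov-type identities, which split additively into an expert part and an $o$-part.

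\textbf{Step 1: the individual gain.} Write $\Lambda_eK_e=B^{\top}P_e+D^{\top}P_eC$ by \eqref{u_e}, and write $\Lambda_oK_e=B^{\top}P_o+D^{\top}P_oC$ by \eqref{BPo}. Adding the two gives
\[
\Lambda^*K_e=B^{\top}P^*+D^{\top}P^*C.
\]
This yields $K^*=(\Lambda^*)^{-1}(B^{\top}P^*+D^{\top}P^*C)=K_e$, provided $\Lambda^*$ is invertible. Invertibility holds because $\Lambda^*>0$ for any converged solution of Algorithm~\ref{algorithm1}, as shown in the proof of Theorem~\ref{convergence_P}.

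\textbf{Step 2: the MF gain.} In the same way, $\Lambda_e\bar K_e=B^{\top}S_e$ by \eqref{u_e}, and $\Lambda_o\bar K_e=B^{\top}S_o$ by \eqref{BSo}. Adding gives $\Lambda^*\bar K_e=B^{\top}S^*$, and hence $\bar K^*=\bar K_e$ by \eqref{barK*}.

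\textbf{Step 3: the Riccati equation \eqref{P*}.} Since $K^*=K_e$, the quadratic term in \eqref{P*} equals $K_e^{\top}\Lambda^*K_e$. So \eqref{P*} reads
\[
A^{\top}P^*+P^*A+C^{\top}P^*C+Q^*-K_e^{\top}\Lambda^*K_e=0 .
\]
This expression is linear in the tuple $(P^*,Q^*,\Lambda^*)$, so it splits into two pieces. The expert piece is $A^{\top}P_e+P_eA+C^{\top}P_eC+Q_e-K_e^{\top}\Lambda_eK_e$, which vanishes because it is exactly \eqref{P_e} after substituting $\Lambda_eK_e=B^{\top}P_e+D^{\top}P_eC$. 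The $o$-piece vanishes by \eqref{Po}.

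\textbf{Step 4: the Riccati equation \eqref{S*}.} Using $\bar K^*=\bar K_e$, the quadratic term $S^*B(\Lambda^*)^{-1}B^{\top}S^*$ equals $\bar K_e^{\top}\Lambda^*\bar K_e$. Also $K^*=K_e$. So \eqref{S*} becomes
\[
(A-BK_e)^{\top}S^*+S^*(A-BK_e)-\bar K_e^{\top}\Lambda^*\bar K_e-Q_\Gamma^*=0,
\]
which is again linear in $(S^*,Q_\Gamma^*,\Lambda^*)$. The expert piece is \eqref{S_e} rewritten with $S_eB\Lambda_e^{-1}B^{\top}S_e=\bar K_e^{\top}\Lambda_e\bar K_e$, and the $o$-piece is \eqref{So}. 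Both vanish, so \eqref{S*} holds.

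The main obstacle is purely one of bookkeeping: the Riccati equations are quadratic in $P$ and $S$ and do not superpose directly. The way around it is to order the argument as above, establishing the gain identities first. Rewriting each quadratic term as $K_e^{\top}\Lambda^* K_e$ or $\bar K_e^{\top}\Lambda^*\bar K_e$ makes the dependence on $\Lambda^*=\Lambda_e+\Lambda_o$ linear, and the splitting then goes through. The only genuine hypothesis needed beyond the four displayed relations is invertibility of $\Lambda^*$, which is supplied by Theorem~\ref{convergence_P}.

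Finally, \eqref{P*k} and \eqref{S*k} hold as well. They are the Lyapunov equations \eqref{Pk} and \eqref{Sk} evaluated at $K=K_e$ and $\bar K=\bar K_e$, and they follow from the identities derived in Steps 3 and 4 by expanding the squares. Theorem~\ref{t1} then confirms consistency with the algorithm's fixed point.
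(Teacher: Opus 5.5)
Your proposal is correct and follows essentially the same route as the paper. You first obtain $K^*=K_e$ and $\bar{K}^*=\bar{K}_e$ from \eqref{BPo} and \eqref{BSo} via the splitting $\Lambda^*=\Lambda_e+(R_o+D^{\top}P_oD)$, then add \eqref{Po} to \eqref{P_e} and \eqref{So} to \eqref{S_e} to recover \eqref{P*} and \eqref{S*}; your explicit note that invertibility of $\Lambda^*$ must be supplied (the paper uses it silently) and your extra check of \eqref{P*k} and \eqref{S*k} are minor additions, not departures.
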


\begin{pf}
	First, recall $K_e$ and $\bar{K}_e$ in \eqref{u_e}. Using $P^*=P_e+P_o$, $S^*=S_e+S_o$, and $R_o=R-R_e$, together with conditions \eqref{BPo} and \eqref{BSo}, the converged control gains are derived as
	\begin{align*}
		K^*=&(R+D^{\top}P^*D)^{-1}(B^{\top}P^*+D^{\top}P^*C)\cr
		=&(R+D^{\top}P^*D)^{-1}(B^{\top}P_{e}+D^{\top}P_{e}C)\cr
		&+(R+D^{\top}P^*D)^{-1}(R+D^{\top}P^*D\cr
		&-R_e-D^{\top}P_eD)\Lambda_e^{-1}(B^{\top}P_{e}+D^{\top}P_{e}C)\cr
		=&\Lambda_e^{-1}(B^{\top}P_{e}+D^{\top}P_{e}C)=K_e
	\end{align*}
	and 
	\begin{align*}
		\bar{K}^*=&(R+D^{\top}P^*D)^{-1}B^{\top}S^*\cr
		=&(R+D^{\top}P^*D)^{-1}B^{\top}S_{e}+(R+D^{\top}P^*D)^{-1}\cr
		&\times (R+D^{\top}P^*D
		-R_e-D^{\top}P_eD)\Lambda_e^{-1}B^{\top}S_{e}\cr
		=&\Lambda_e^{-1}B^{\top}S_{e}=\bar{K}_e.
	\end{align*}
	
	Combining \eqref{Po} with \eqref{P_e}, and \eqref{So} with \eqref{S_e} leads to
	\begin{align*}
		&A^{\top}(P_e+P_{o})+(P_e+P_{o})A+C^{\top}(P_e+P_{o})C\nonumber\\
		&-K_e^{\top}(R_e+R_o+D^{\top}(P_e+P_{o})D)K_e+Q_e+Q_o=0,\\
		&(A-BK_e)^{\top}(S_{e}+S_{o})+(S_{e}+S_{o})(A-BK_e)\nonumber\\
		&-\bar{K}_e^{\top}(R_e+R_o+D^{\top}\!(P_e+P_{o})D)\bar{K}_e\!-Q_{\Gamma_e}\!-Q_{\Gamma_o}=0.
	\end{align*}
	Substituting $R=R_o+R_e$, $P^*=P_e+P_o$, $S^*=S_e+S_o$, $Q^*=Q_e+Q_o$, and $Q_{\Gamma}^*=Q_{\Gamma_e}+Q_{\Gamma_o}$ into the above equations yields \eqref{P*} and \eqref{S*}. This completes the proof.    \hfill $\square$
\end{pf}

\section{Model-Free Integral Inverse RL}\label{3}
To eliminate the reliance on system dynamics in the model-based inverse RL framework, this section develops a model-free inverse RL algorithm by leveraging integral RL. Following the iterative structure of Algorithm \ref{algorithm1}, the proposed approach solves Problem \ref{IRL} using only offline trajectory data collected from learner agents and a small amount of expert data, without requiring prior knowledge of system matrices.

\subsection{Data-Driven Solutions}\label{3.1}
First, we derive a data-driven iterative equation based on integral RL to calculate $P^l$ and the corresponding gain $K^l$. The system dynamics \eqref{x} of the learner agents can be rewritten as
\begin{align}\label{xKe}
	dx_{i} = &\big[Ax_{i}-BK_ex_{i}+B(u_{i}+K_ex_{i})\big]dt\nonumber\\
	& + \big[Cx_{i}-DK_ex_{i}+D(u_{i}+K_ex_{i})\big]dw_i.
\end{align}
By \eqref{xKe}, utilizing Itô’s formula to ${x}_i^{\top}P^l{x}_i$ yields
\begin{align}\label{Pito}
	d\big(x_i^{\top}P^lx_i\big)=&2x_i^{\top}P^l\big((A-BK_e)x_{i}+B(u_{i}+K_ex_{i})\big)dt\nonumber\\
	&+\big((C-DK_e)x_{i}+D(u_{i}+K_ex_{i})\big)^{\top}P^l\nonumber\\
	&\times \big((C-DK_e)x_{i}+D(u_{i}+K_ex_{i})\big)dt\nonumber\\
	&+2x_i^{\top}P^l\big(Cx_{i}+Du_{i}\big)dw_i.
\end{align}
Substituting \eqref{Pl} into \eqref{Pito}, one has
\begin{align}\label{Pdiff}
	d\big(x_i^{\top}P^lx_i\big)=&\big[-x_i^{\top}(Q^l+K_e^{\top}RK_e)x_i+u_i^{\top}\Theta^lu_i\nonumber\\
	&-x_i^{\top}\!K_e^{\top}\!\Theta^lK_ex_i+2(u_i+K_ex_i)^{\top}\!\mathcal{B}_1^lx_i\big]dt\nonumber\\
	&+2x_i^{\top}P^l\big(Cx_{i}+Du_{i}\big)dw_i, 
\end{align}
where $\mathcal{B}_1^l\triangleq B^{\top}P^l+D^{\top}P^lC$ and $\Theta^l\triangleq D^{\top}P^lD$. Note that $K^l=(R+\Theta^l)^{-1}\mathcal{B}_1^l$. Let $T>0$ be an integral time interval. We integrate both sides of \eqref{Pdiff} from $t$ to $t+T$, and take the expectation to obtain
\begin{align}\label{Pintegrate}
	&\mathbb{E}\big[x_i(t+T)^{\top}P^lx_i(t+T)\big]-\mathbb{E}\big[x_i(t)^{\top}P^lx_i(t)\big]\nonumber\\
	&-2\mathbb{E}\bigg[\int_{t}^{t+T}(u_i+K_ex_i)^{\top}\mathcal{B}_1^lx_id\tau\bigg]\cr
	&-\mathbb{E}\bigg[\int_{t}^{t+T}u_i^{\top}\Theta^lu_id\tau\bigg]+\mathbb{E}\bigg[\int_{t}^{t+T}x_i^{\top}K_e^{\top}\Theta^lK_ex_id\tau\bigg]\cr
	=&-\mathbb{E}\bigg[\int_{t}^{t+T}x_i^{\top}(Q^l+K_e^{\top}RK_e)x_id\tau\bigg].
\end{align}
Equation \eqref{Pintegrate} is the data-driven version of \eqref{Pl} and \eqref{Kl} in Algorithm \ref{algorithm1}, and it avoids explicit reliance on the system matrix $A$. To eliminate the dependence on all remaining system parameters, a completely model-free implementation is presented in the next subsection.

Next, we derive a data-driven counterpart of the weight update equation \eqref{Ql}. Premultiplying and postmultiplying both sides of \eqref{Ql} by $x_i^\top$ and $x_i$, respectively, and rearranging terms yield
\begin{align}\label{xQlx}
	x_i^{\top}Q^{l+1}x_i=&-(Ax_i+Bu_i)^{\top}P^lx_i-x_i^{\top}P^l(Ax_i+Bu_i)\nonumber\\
	&-(Cx_i+Du_i)^{\top}\!P^l(Cx_i+Du_i)+u_i^{\top}\Theta^lu_i\nonumber\\
	&+x_i^{\top}\!(K^l)^{\top}\!(R+\Theta^l)K^lx_i+2u_i^{\top}\!\mathcal{B}_1^lx_i.
\end{align}
Based on \eqref{x}, taking the expectation and integrating both sides of \eqref{xQlx} over the time interval $[t,t+T]$ yields
\begin{align}\label{ExQlx}
	&\mathbb{E}\int_{t}^{t+T}x_i^{\top}Q^{l+1}x_id\tau\nonumber\\
	=&\ \mathbb{E}x_i(t)^{\top}P^lx_i(t)-\mathbb{E}x_i(t+T)^{\top}P^lx_i(t+T)\nonumber\\
	&+\mathbb{E}\int_{t}^{t+T}x_i^{\top}(K^l)^{\top}(R+\Theta^l)K^lx_id\tau\nonumber\\
	&+2\mathbb{E}\int_{t}^{t+T}u_i^{\top}\mathcal{B}_1^lx_id\tau+\mathbb{E}\int_{t}^{t+T}u_i^{\top}\Theta^lu_id\tau.
\end{align}
Now, the iteration procedure of the first loop in Algorithm~\ref{algorithm1} can be implemented solely using the measured trajectory data. Moreover, the final values used for the second loop still follow the definitions given in Algorithm~\ref{algorithm1}.

Second, we continue to develop the data-driven iterative forms for \eqref{Sl} and \eqref{barKl}. Rewrite the dynamics \eqref{x} of the learner agents and take the expectation on both sides to obtain
\begin{align}\label{xbarKe}
	d\bar{x}_{i} = &\big[\big(A-B(K^*+\bar{K}_e)\big)\bar{x}_{i}\nonumber\\
	&+B(K^*+\bar{K}_e)\bar{x}_{i}+B\bar{u}_{i}\big]dt,
\end{align}
where $\bar{x}_{i}=\mathbb{E}[x_i]$ and $\bar{u}_i=\mathbb{E}[u_i]$. Then, differentiating $\bar{x}_i^{\top}S^k\bar{x}_i$ with respect to $\bar{x}_i$ gives
\begin{align}\label{dS}
	d\big(\bar{x}_i^{\top}S^k\bar{x}_i\big)=&2\bar{x}_i^{\top}S^k\big[\big(A-B(K^*+\bar{K}_e)\big)\bar{x}_{i}\nonumber\\
	&+B(K^*+\bar{K}_e)\bar{x}_{i}+B\bar{u}_{i}\big]dt.
\end{align}
Integrating the above equation over $[t,t+T]$ and substituting \eqref{Sl}, we obtain
\begin{align}\label{Sintegrate}
	&\bar{x}_i(t+T)^{\top}S^k\bar{x}_i(t+T)-\bar{x}_i(t)^{\top}S^k\bar{x}_i(t)\nonumber\\
	&-2\int_{t}^{t+T}\big(\bar{u}_i+(K^*+\bar{K}_e)\bar{x}_i\big)^{\top}\mathcal{B}_2^k\bar{x}_id\tau\cr
	=&\int_{t}^{t+T}\bar{x}_i^{\top}\big(Q_{\Gamma}^k-\bar{K}_e^{\top}(R+\Theta^*)\bar{K}_e\big)\bar{x}_id\tau,
\end{align}
where $\mathcal{B}_2^k\triangleq B^{\top}S^k$, and $\Theta^*=\lim_{l\to \infty}\Theta^l$ has been obtained by the first loop in Algorithm~\ref{algorithm1}. Then \eqref{barKl}
can be calculated by $\bar{K}^k=(R+\Theta^*)^{-1}\mathcal{B}_2^k$. Similarly, the weight update equation \eqref{Gammal} can also be solved through measured trajectory data.
Premultiplying and postmultiplying both sides of \eqref{Gammal} by $\bar{x}_i^\top$ and $\bar{x}_i$, respectively, we have
\begin{align}\label{xGammalx}
	\bar{x}_i^{\top}Q_{\Gamma}^{k+1}\bar{x}_i=&(A\bar{x}_i+B\bar{u}_i)^{\top}S^k\bar{x}_i+\bar{x}_i^{\top}S^k (A\bar{x}_i+B\bar{u}_i)\nonumber\\
	&-2(\bar{u}_i+K^*\bar{x}_i)^{\top}\mathcal{B}_2^k\bar{x}_i\nonumber\\
	&-\bar{x}_i^{\top}(\bar{K}^k)^{\top}(R+\Theta^*)\bar{K}^k\bar{x}_i.
\end{align}
From \eqref{xbarKe}, we deduce that $A\bar{x}_i+B\bar{u}_i=\frac{d\bar{x}_i}{dt}$. Substituting this relation into \eqref{xGammalx} and integrating both sides over $[t,t+T]$ yields
\begin{align}\label{ExGammalx}
	&\int_{t}^{t+T}\bar{x}_i^{\top}Q_{\Gamma}^{k+1}\bar{x}_id\tau\nonumber\\
	=&\bar{x}_i(t+T)^{\top}S^k\bar{x}_i(t+T)-\bar{x}_i(t)^{\top}S^k \bar{x}_i(t)\nonumber\\
	&-2\int_{t}^{t+T}(\bar{u}_i+K^*\bar{x}_i)^{\top}\mathcal{B}_2^k\bar{x}_id\tau\nonumber\\
	&-\int_{t}^{t+T} \bar{x}_i^{\top}(\bar{K}^k)^{\top}(R +\Theta^*) \bar{K}^k\bar{x}_i d\tau.
\end{align}
To summarize, we have derived the data-driven iterative equations \eqref{Pintegrate}, \eqref{ExQlx}, \eqref{Sintegrate}, and \eqref{ExGammalx}, which can replace the model-based counterparts in Algorithm~\ref{algorithm1}.

\subsection{Model-Free Implementation}\label{3.2}
A model-free implementation method for obtaining the data-driven solutions using only measured trajectory data is shown below. For a matrix $G\in \mathbb{R}^{m\times n}$, a symmetric matrix $M\in \mathbb{S}^{n}$ and a vector $x\in \mathbb{R}^n$, we first define the following notation operators:
$\mathrm{vec}(G)\triangleq [g_{11}, g_{21},\dots, g_{m1}, g_{12}, g_{22},\dots, g_{mn}]^{\top} \in \mathbb{R}^{mn}$,
$\mathrm{vecs}(M)\!\triangleq\![m_{11},2m_{12},\! \dots, 2m_{1n}, m_{22}, 2m_{23},\! \dots, 2m_{n-1,n}, m_{nn}]^{\top} \!\in \mathbb{R}^{\frac{n(n+1)}{2}}$,
$\tilde{x}\triangleq [x_{1}^2, x_1x_2,\!\dots, x_1x_n, x_2^2, \!\dots, x_{n-1}x_n, x_n^2]^{\top} \!\in  \mathbb{R}^{\frac{n(n+1)}{2}}$.
Let $G=[g_1^{\top},g_2^{\top},\dots,g_m^{\top}]^{\top} \in \mathbb{R}^{m\times n}$. For $i,j=1,2,\dots,m$, we further define 
\begin{align*}
	&\mu_{ij} \triangleq g_i^{\top} \otimes g_j^{\top} \in \mathbb{R}^{n^2},\\
	&\check{G}\!\triangleq \![\mu_{11}, \!\dots, \mu_{1m}, \mu_{22}, \!\dots, \mu_{(m-1)m}, \mu_{mm}]\!\in \!\mathbb{R}^{n^2\times \!\frac{m(m+1)}{2}}.
\end{align*}
To further solve for $P^l$ and $K^l$ from the data-driven equation \eqref{Pintegrate}, we utilize the Kronecker product and vectorization theory, and define the following matrices as
\begin{equation*}
	\left\{
	\begin{aligned}
		\Delta_{\tilde{x}_i\tilde{x}_i}&\triangleq \mathbb{E}[\tilde{x}_i(t+T)-\tilde{x}_i(t), \dots,\\ 
		&\qquad\ \tilde{x}_i(t+sT)-\tilde{x}_i(t+(s-1)T)]^{\top},\\
		\mathcal{I}_{x_ix_i}&\triangleq \mathbb{E}\bigg[\int_{t}^{t+T}x_i(\tau)\otimes x_i(\tau)d\tau, \dots,\\ &\qquad\ \int_{t+(s-1)T}^{t+sT}x_i(\tau)\otimes x_i(\tau)d\tau\bigg]^{\top},\\
		\mathcal{I}_{x_iu_i}&\triangleq \mathbb{E}\bigg[\int_{t}^{t+T}x_i(\tau)\otimes u_i(\tau)d\tau, \dots,\\&\qquad\ \int_{t+(s-1)T}^{t+sT}x_i(\tau)\otimes u_i(\tau)d\tau\bigg]^{\top},\\
		\mathcal{I}_{\tilde{u}_i}&\triangleq \mathbb{E}\bigg[\int_{t}^{t+T}\tilde{u}_i(\tau)d\tau, \dots, \int_{t+(s-1)T}^{t+sT}\tilde{u}_i(\tau)d\tau\bigg]^{\top},
	\end{aligned}
	\right.
\end{equation*}
where $s$ denotes the number of time intervals from $t$ to $t+sT$. Then based on the above definitions, equation \eqref{Pintegrate} can be derived as
\begin{align*}
	\Phi_p\begin{bmatrix}
		\mathrm{vecs}(P^l)\\
		\mathrm{vec}(\mathcal{B}_1^l)\\
		\mathrm{vecs}(\Theta^l)
	\end{bmatrix}=\Xi_p^l,
\end{align*}
where $\Phi_p=[\Delta_{\tilde{x}_i\tilde{x}_i},-2\mathcal{I}_{x_ix_i}(I_n\otimes K_e^{\top})-2\mathcal{I}_{x_iu_i},\mathcal{I}_{x_ix_i}\check{K}_e-\mathcal{I}_{\tilde{u}_i}]$, and $\Xi_p^l=-\mathcal{I}_{x_ix_i}\mathrm{vec}(Q^l+K_e^{\top}RK_e)$. 
Define \[\mathcal{I}_{\tilde{x}_i}\triangleq \mathbb{E}\bigg[\int_{t}^{t+T}\tilde{x}_i(\tau)d\tau, \dots, \int_{t+(s-1)T}^{t+sT}\tilde{x}_i(\tau)d\tau\bigg]^{\top},\]
then based on the above definitions and obtained parameters, \eqref{ExQlx} has a linear matrix form as
\begin{align*}
	\Phi_q \mathrm{vecs}(Q^{l+1})=\Xi_q^l,
\end{align*}
where $\Phi_q=\mathcal{I}_{\tilde{x}_i}$, and $\Xi_q^l=-\Delta_{\tilde{x}_i\tilde{x}_i}\mathrm{vecs}(P^l)+\mathcal{I}_{x_ix_i}\check{K}^l \mathrm{vecs}(R+\Theta^l)+2\mathcal{I}_{x_iu_i}\mathrm{vec}(\mathcal{B}_1^l)+\mathcal{I}_{\tilde{u}_i}\mathrm{vecs}(\Theta^l)$. 
\begin{lem}\label{L3.1}
	If there exists $s_1>0$, such that for all $s\ge s_1$,
	\begin{align}
		&\mathrm{rank}([\mathcal{I}_{{x}_i{x}_i},\mathcal{I}_{{x}_i{u}_i},\mathcal{I}_{\tilde{u}_i}])=\frac{n(n+1)}{2}+mn+\frac{m(m+1)}{2},\label{rank1.1}\\
		&\mathrm{rank}(\mathcal{I}_{\tilde{x}_i})=\frac{n(n+1)}{2}.\label{rank1.2}
	\end{align}
	Then using the least squares method, $P^l$, $\mathcal{B}_1^l$, $\Theta^l$ and ${Q}^{l+1}$ can be calculated by
	\begin{align}
		\begin{bmatrix}
			\mathrm{vecs}(P^l)\\
			\mathrm{vec}(\mathcal{B}_1^l)\\
			\mathrm{vecs}(\Theta^l)
		\end{bmatrix}&=(\Phi_p^{\top}\Phi_p)^{-1}\Phi_p^{\top}\Xi_p^l,\label{data1.1}\\
		\mathrm{vecs}(Q^{l+1})&=(\Phi_q^{\top}\Phi_q )^{-1}\Phi_q^{\top}\Xi_q^l.\label{data1.2}
	\end{align}
	Moreover, the gain $K^l$ is given by $K^l=(R+\Theta^l)^{-1}\mathcal{B}_1^l$. 
\end{lem}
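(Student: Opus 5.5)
The plan is to show that, under the rank conditions, the two linear systems $\Phi_p[\cdot]=\Xi_p^l$ and $\Phi_q\,\mathrm{vecs}(Q^{l+1})=\Xi_q^l$ are consistent (the model-based quantities solve them exactly) and that $\Phi_p$ and $\Phi_q$ have full column rank. The least-squares formulas \eqref{data1.1}--\eqref{data1.2} then return exactly these quantities.

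\emph{Consistency.} By construction, \eqref{Pintegrate} holds for every interval $[t+(j-1)T,t+jT]$, $j=1,\dots,s$, whenever $P^l$ solves \eqref{Pl} and $\mathcal{B}_1^l$, $\Theta^l$ are defined from $P^l$. The steps are as follows.
\begin{itemize}
\item Rewrite each quadratic form using the identities $x^\top Mx=\tilde{x}^\top\mathrm{vecs}(M)=(x\otimes x)^\top\mathrm{vec}(M)$.
\item Write $(u+K_ex)^\top\mathcal{B}_1^lx=(x\otimes u)^\top\mathrm{vec}(\mathcal{B}_1^l)+(x\otimes x)^\top(I_n\otimes K_e^\top)\mathrm{vec}(\mathcal{B}_1^l)$.
\item Use $x^\top K_e^\top\Theta K_ex=(x\otimes x)^\top\check{K}_e\,\mathrm{vecs}(\Theta)$ and $u^\top\Theta u=\tilde{u}^\top\mathrm{vecs}(\Theta)$.
\end{itemize}
Stacking over $j$ shows that the true vector $[\mathrm{vecs}(P^l);\mathrm{vec}(\mathcal{B}_1^l);\mathrm{vecs}(\Theta^l)]$ satisfies $\Phi_p z=\Xi_p^l$. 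The same bookkeeping applied to \eqref{ExQlx} shows that $\mathrm{vecs}(Q^{l+1})$, with $Q^{l+1}$ from \eqref{Ql}, satisfies $\Phi_q z=\Xi_q^l$, given the already computed $P^l$, $\mathcal{B}_1^l$, $\Theta^l$, and $K^l$.

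\emph{Full column rank of $\Phi_q$.} This is immediate, since $\Phi_q=\mathcal{I}_{\tilde{x}_i}$ and \eqref{rank1.2} gives rank $n(n+1)/2$. Hence $\Phi_q^\top\Phi_q$ is invertible, and the linear system has the unique solution \eqref{data1.2}.

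\emph{Full column rank of $\Phi_p$.} This is the main obstacle, because \eqref{rank1.1} concerns $[\mathcal{I}_{x_ix_i},\mathcal{I}_{x_iu_i},\mathcal{I}_{\tilde{u}_i}]$ rather than $\Phi_p$. The argument is by contradiction.
\begin{enumerate}
\item Suppose $\Phi_p[\mathrm{vecs}(Y);\mathrm{vec}(Z);\mathrm{vecs}(W)]=0$ for some symmetric $Y$, some $Z$, and some symmetric $W$.
\item Reverse the derivation of \eqref{Pintegrate} via Itô's formula applied to $x_i^\top Yx_i$. Using $\Delta_{\tilde{x}\tilde{x}}\mathrm{vecs}(Y)=\mathcal{I}_{x_ix_i}\mathrm{vec}(\mathcal{A}_Y)+2\mathcal{I}_{x_iu_i}\mathrm{vec}(B^\top Y+D^\top YC)+\mathcal{I}_{\tilde{u}_i}\mathrm{vecs}(D^\top YD)$, where $\mathcal{A}_Y=A^\top Y+YA+C^\top YC$, rewrite the relation as a combination of the columns of $[\mathcal{I}_{x_ix_i},\mathcal{I}_{x_iu_i},\mathcal{I}_{\tilde{u}_i}]$.
\item Read off the coefficients. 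The $\mathcal{I}_{x_ix_i}$ coefficient is the symmetric part of $\mathcal{A}_Y-2K_e^\top Z+K_e^\top WK_e$. The $\mathcal{I}_{x_iu_i}$ coefficient is $\mathrm{vec}(2(B^\top Y+D^\top YC)-2Z)$. The $\mathcal{I}_{\tilde{u}_i}$ coefficient is $\mathrm{vecs}(D^\top YD-W)$.
\item Care is needed with the redundancy of $\mathcal{I}_{x_ix_i}$, since $x\otimes x$ has repeated entries. The remedy is to interpret the rank condition on the symmetric reduction, using $\tilde{x}$ in place of $x\otimes x$. This is consistent with the count $n(n+1)/2$ in \eqref{rank1.1}.
\item Rank \eqref{rank1.1} forces all three coefficients to vanish, so $W=D^\top YD$ and $Z=B^\top Y+D^\top YC$.
\item Substituting into the first coefficient gives $(A-BK_e)^\top Y+Y(A-BK_e)+(C-DK_e)^\top Y(C-DK_e)=0$.
\item Since $K_e$ is a stabilizer, Lemma~\ref{le:stable} (statement 3 with $Y$-term zero) gives $Y=0$, and hence $Z=0$ and $W=0$.
\end{enumerate}
So $\Phi_p$ has full column rank, and \eqref{data1.1} returns the true $P^l$, $\mathcal{B}_1^l$, and $\Theta^l$.

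Finally, $K^l=(R+\Theta^l)^{-1}\mathcal{B}_1^l$ coincides with \eqref{Kl} by the definitions of $\mathcal{B}_1^l$ and $\Theta^l$. Here $R+\Theta^l=\Lambda^l>0$ is invertible by Theorem~\ref{convergence_P}.
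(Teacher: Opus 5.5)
Your proposal is correct and takes essentially the same route as the paper. You assume a nonzero kernel vector of $\Phi_p$, then use It\^o's formula to rewrite $\Delta_{\tilde{x}_i\tilde{x}_i}\mathrm{vecs}(\cdot)$ so the relation becomes a combination of the columns of $[\mathcal{I}_{x_ix_i},\mathcal{I}_{x_iu_i},\mathcal{I}_{\tilde{u}_i}]$. The rank condition then forces all three coefficients to vanish, and the homogeneous stochastic Lyapunov equation with the stabilizer $K_e$ (Lemma~\ref{le:stable}) gives zero. $\Phi_q$ is settled directly by the second rank condition.

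Your extra remarks only make explicit what the paper leaves implicit. These are the consistency of the true iterates with the linear systems and the invertibility of $R+\Theta^l$. They also cover the duplicate columns of $\mathcal{I}_{x_ix_i}$, which are harmless: the coefficient matrix is symmetric, and the literal rank condition already means $[\mathcal{I}_{\tilde{x}_i},\mathcal{I}_{x_iu_i},\mathcal{I}_{\tilde{u}_i}]$ has full column rank, so no reinterpretation is needed.
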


\begin{pf}
	First, under the rank condition \eqref{rank1.1}, we show by contradiction that $\Phi_p$ has full column rank, so that the solution to \eqref{data1.1} is unique.
	
	Suppose that $\Phi_pH_p=0$, where $H_p\!=\![\mathrm{vecs}(X)^{\top}\!,\!\mathrm{vec}(Y)^{\top}\!,\\ \mathrm{vecs}(Z)^{\top}]^{\top}$ is a nonzero vector, the symmetric matrices $X, Z\in \mathbb{S}^{n}$, and the matrix $Y \in \mathbb{R}^{m\times n}$. Then one has
	\begin{align}\label{Hp1}
		&\Delta_{\tilde{{x}}_i\tilde{{x}}_i}\mathrm{vecs}(X)-\mathcal{I}_{{x}_i{x}_i}\mathrm{vec}(K_e^{\top}Y+
		Y^{\top}K_e-K_e^{\top}ZK_e)\nonumber\\
		&-2\mathcal{I}_{{x}_i{u}_i}\mathrm{vec}(Y)-\mathcal{I}_{\tilde{u}_i}\mathrm{vecs}(Z)=0.
	\end{align}
	By \eqref{Pito} and \eqref{Pintegrate}, we have
	\begin{align*}
		&\Delta_{\tilde{{x}}_i\tilde{{x}}_i}\mathrm{vecs}(X)\nonumber\\
		=&\ \mathcal{I}_{{x}_i{x}_i}\mathrm{vec}\big((A-BK_e)^{\top}X+X(A-BK_e)\nonumber\\
		&+(C-DK_e)^{\top}X(C-DK_e)+K_e^{\top}B^{\top}X+XBK_e\nonumber\\
		&+K_e^{\top}D^{\top}XC+C^{\top}XDK_e-K_e^{\top}D^{\top}XDK_e\big)\nonumber\\
		&+2\mathcal{I}_{{x}_i{u}_i}\mathrm{vec}(B^{\top}X+D^{\top}XC)+\mathcal{I}_{\tilde{u}_i}\mathrm{vecs}(D^{\top}XD).
	\end{align*}
	Then substituting the above equation into \eqref{Hp1} yields
	\begin{align}\label{Hp3}
		\mathcal{I}_{{x}_i{x}_i}\mathrm{vec}(M_1)+2\mathcal{I}_{{x}_i{u}_i}\mathrm{vec}(M_{2})+\mathcal{I}_{\tilde{u}_i}\mathrm{vecs}(M_3)=0,
	\end{align}
	where $M_1=(A-BK_e)^{\top}X+X(A-BK_e)+(C-DK_e)^{\top}X(C-DK_e)+K_e^{\top}M_2+M_2^{\top}K_e-K_e^{\top}M_3K_e$, $M_2=B^{\top}X+D^{\top}XC-Y$, $M_3=D^{\top}XD-Z$.
	Under the rank condition \eqref{rank1.1}, equation \eqref{Hp3} implies that 
	\begin{equation}\label{M=0}
		M_1=M_2=M_3=0.
	\end{equation}
	Since $K_e$ is a stabilizer of the expert agent system, we can get
	\begin{align*}
	    &(A-BK_e)^{\top}X+X(A-BK_e)\nonumber\\
	    &+(C-DK_e)^{\top}X(C-DK_e)=0,
	\end{align*}
	such that $X=0$. Then based on \eqref{M=0}, it is further deduced that $Y=Z=0$.
	Thus, the result $H_p = 0$ leads to a contradiction with the assumption that $H_p\ne 0$, which indicates that $\Phi_p$ must have full column rank. Since the rank condition \eqref{rank1.2} holds, $\Phi_q $ has full column rank, and equation \eqref{data1.2} has a unique solution.
	This completes the proof.   \hfill $\square$
\end{pf}
To implement the data-driven iterative equation \eqref{Sintegrate}, we define
\begin{equation*}
	\left\{
	\begin{aligned}
		\Delta_{\tilde{\bar{x}}_i\tilde{\bar{x}}_i}\triangleq& [\tilde{\bar{x}}_i(t+T)-\tilde{\bar{x}}_i(t), \dots,\\
		&\ \tilde{\bar{x}}_i(t+sT)-\tilde{\bar{x}}_i(t+(s-1)T)]^{\top},\\
		\mathcal{I}_{\bar{x}_i\bar{x}_i}\triangleq& \bigg[\int_{t}^{t+T}\bar{x}_i(\tau)\otimes \bar{x}_i(\tau)d\tau, \dots,\\&\ \int_{t+(s-1)T}^{t+sT}\bar{x}_i(\tau)\otimes \bar{x}_i(\tau)d\tau\bigg]^{\top},\\
		\mathcal{I}_{\bar{x}_i\bar{u}_i}\triangleq& \bigg[\int_{t}^{t+T}\bar{x}_i(\tau)\otimes \bar{u}_i(\tau)d\tau, \dots,\\&\ \int_{t+(s-1)T}^{t+sT}\bar{x}_i(\tau)\otimes \bar{u}_i(\tau)d\tau\bigg]^{\top},
	\end{aligned}
	\right.
\end{equation*}
and equation \eqref{Sintegrate} can be transformed into a linear matrix form as
\begin{align*}
	\Psi_{s}\begin{bmatrix}
		\mathrm{vecs}(S^k)\\
		\mathrm{vec}(\mathcal{B}_2^k)
	\end{bmatrix}=\Upsilon_{s}^k,
\end{align*}
where $\Psi_{s}=\big[\Delta_{\tilde{\bar{x}}_i\tilde{\bar{x}}_i},-2\mathcal{I}_{\bar{x}_i\bar{x}_i}\big(I_n\otimes (K^*+\bar{K}_e)^{\top}\big)-2\mathcal{I}_{\bar{x}_i\bar{u}_i}\big]$, and $\Upsilon_{s}^k=\mathcal{I}_{\bar{x}_i\bar{x}_i}\mathrm{vec}\big(Q_{\Gamma}^k-\bar{K}_e^{\top}(R+\Theta^*)\bar{K}_e\big)$.
For \eqref{ExGammalx}, we define 
\[
\mathcal{I}_{\tilde{\bar{x}}_i}\triangleq \bigg[\int_{t}^{t+T}\tilde{\bar{x}}_i(\tau)d\tau, \dots, \int_{t+(s-1)T}^{t+sT}\tilde{\bar{x}}_i(\tau)d\tau\bigg]^{\top}.
\]
Then equation \eqref{ExGammalx} has a linear matrix form as 
\begin{align*}
	\Psi_{\gamma}\mathrm{vecs}(Q_{\Gamma}^{k+1})=\Upsilon_{\gamma}^k,
\end{align*}
where $\Psi_{\gamma}=\mathcal{I}_{\tilde{\bar{x}}_i}$, and $\Upsilon_{\gamma}^k=\Delta_{\tilde{\bar{x}}_i\tilde{\bar{x}}_i}\mathrm{vecs}(S^k)-2\big[\mathcal{I}_{\bar{x}_i\bar{u}_i}+\mathcal{I}_{\bar{x}_i\bar{x}_i}\big(I_n\otimes (K^*)^{\top}\big)\big]\mathrm{vec}(\mathcal{B}_2^k)-\mathcal{I}_{\bar{x}_i\bar{x}_i}\check{\bar{K}}^k\mathrm{vecs}(R+\Theta^*)$.
\begin{lem}\label{L3.2}
	If there exists $s_2>0$, such that for all $s\ge s_2$,
	\begin{align}
		&\mathrm{rank}([\mathcal{I}_{\bar{x}_i\bar{x}_i},\mathcal{I}_{\bar{x}_i\bar{u}_i}])=\frac{n(n+1)}{2}+mn,\label{rank2.1}\\
		&\mathrm{rank}(\mathcal{I}_{\tilde{\bar{x}}_i})=\frac{n}{2}(n+1).\label{rank2.2}
	\end{align}
	Then using the least squares method, $S^k$, $\mathcal{B}_2^k$ and $Q_{\Gamma}^{k+1}$ can be computed by
	\begin{align}
		\begin{bmatrix}
			\mathrm{vecs}(S^k)\\
			\mathrm{vec}(\mathcal{B}_2^k)
		\end{bmatrix}&=(\Psi_{s}^{\top}\Psi_{s})^{-1}\Psi_{s}^{\top}\Upsilon_{s}^k,\label{data2.1}\\
		\mathrm{vecs}(Q_{\Gamma}^{k+1})&=(\Psi_{\gamma}^{\top}\Psi_{\gamma} )^{-1}\Psi_{\gamma}^{\top}\Upsilon_{\gamma}^k,\label{data2.2}
	\end{align}
	and $\bar{K}^k$ can be obtained by $\bar{K}^k=(R+\Theta^*)^{-1}\mathcal{B}_2^k$.
\end{lem}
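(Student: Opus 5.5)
The proof mirrors that of Lemma~\ref{L3.1}. First I will show that $\Psi_s$ has full column rank under \eqref{rank2.1}, which makes $\Psi_s^{\top}\Psi_s$ invertible and \eqref{data2.1} the unique least-squares solution. Then \eqref{rank2.2} gives the same conclusion for $\Psi_\gamma$ and \eqref{data2.2}. Finally, $\bar{K}^k=(R+\Theta^*)^{-1}\mathcal{B}_2^k$ follows from \eqref{barKl}, since $\mathcal{B}_2^k=B^{\top}S^k$ and $\Lambda^*=R+\Theta^*>0$ by Theorem~\ref{convergence_P}.

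For $\Psi_s$, I argue by contradiction. Suppose $\Psi_s H_s=0$ with $H_s=[\mathrm{vecs}(X)^{\top},\mathrm{vec}(Y)^{\top}]^{\top}\neq 0$, where $X\in\mathbb{S}^n$ and $Y\in\mathbb{R}^{m\times n}$. Write $K_s\triangleq K^*+\bar{K}_e$. Then
\[
\Delta_{\tilde{\bar{x}}_i\tilde{\bar{x}}_i}\mathrm{vecs}(X)-\mathcal{I}_{\bar{x}_i\bar{x}_i}\mathrm{vec}(K_s^{\top}Y+Y^{\top}K_s)-2\mathcal{I}_{\bar{x}_i\bar{u}_i}\mathrm{vec}(Y)=0 .
\]
Next I apply the deterministic chain rule \eqref{dS} to $\bar{x}_i^{\top}X\bar{x}_i$ using \eqref{xbarKe}, and integrate over each interval. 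This gives
\[
\Delta_{\tilde{\bar{x}}_i\tilde{\bar{x}}_i}\mathrm{vecs}(X)=\mathcal{I}_{\bar{x}_i\bar{x}_i}\mathrm{vec}\big((A-BK_s)^{\top}X+X(A-BK_s)+K_s^{\top}B^{\top}X+XBK_s\big)+2\mathcal{I}_{\bar{x}_i\bar{u}_i}\mathrm{vec}(B^{\top}X).
\]
Substituting this yields $\mathcal{I}_{\bar{x}_i\bar{x}_i}\mathrm{vec}(M_1)+2\mathcal{I}_{\bar{x}_i\bar{u}_i}\mathrm{vec}(M_2)=0$ with $M_2=B^{\top}X-Y$ and $M_1=(A-BK_s)^{\top}X+X(A-BK_s)+K_s^{\top}M_2+M_2^{\top}K_s$. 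Here $M_1$ is symmetric, so it is represented by its $n(n+1)/2$ independent entries, which is how the count in \eqref{rank2.1} arises. The rank condition then forces $M_1=0$ and $M_2=0$. With $M_2=0$ we get $(A-BK_s)^{\top}X+X(A-BK_s)=0$. Since $K^*=K_e$ by Theorem~\ref{convergence_P} and $A-B(K_e+\bar{K}_e)$ is Hurwitz by Lemma~\ref{stabilizing}, this Lyapunov equation has only the trivial solution, so $X=0$ (Lemma~\ref{le:stable} with $C=D=0$). Then $Y=B^{\top}X=0$, contradicting $H_s\neq0$.

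For $\Psi_\gamma=\mathcal{I}_{\tilde{\bar{x}}_i}$, full column rank is exactly \eqref{rank2.2}, so $\Psi_\gamma^{\top}\Psi_\gamma$ is invertible. The right-hand side $\Upsilon_\gamma^k$ is built only from quantities that \eqref{data2.1} has already determined uniquely, so $Q_\Gamma^{k+1}$ is determined uniquely as well.

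The main obstacle is the rank bookkeeping. The Kronecker columns $\mathcal{I}_{\bar{x}_i\bar{x}_i}$ have $n^2$ entries, but only their symmetric part matters, so the step from rank \eqref{rank2.1} to $M_1=M_2=0$ has to be justified by restricting to symmetric $M_1$. I would handle this the same way Lemma~\ref{L3.1} does, identifying $\mathcal{I}_{\bar{x}_i\bar{x}_i}\mathrm{vec}(M_1)$ with $\mathcal{I}_{\tilde{\bar{x}}_i}\mathrm{vecs}(M_1)$.
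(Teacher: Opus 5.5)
Your proposal is correct and follows the paper's proof essentially step for step. You use the same contradiction argument with $\Psi_s H_s=0$, the same substitution via \eqref{dS}--\eqref{Sintegrate} producing the pair $(N_1,N_2)$ (your $(M_1,M_2)$), the same chain $N_1=N_2=0\Rightarrow X=0\Rightarrow Y=0$ from Hurwitzness of $A-B(K^*+\bar{K}_e)$, and full column rank of $\Psi_\gamma$ directly from \eqref{rank2.2}. Your extra remarks make explicit what the paper leaves implicit: you justify Hurwitzness through $K^*=K_e$ and Lemma~\ref{stabilizing}, and you note that the count in \eqref{rank2.1} works because $M_1$ is symmetric, so $\mathcal{I}_{\bar{x}_i\bar{x}_i}\mathrm{vec}(M_1)=\mathcal{I}_{\tilde{\bar{x}}_i}\mathrm{vecs}(M_1)$.
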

\begin{pf}
	Similar to the proof of Lemma \ref{L3.1}, we adopt a contradictory approach to show that $\Psi_{s}$ has full column rank. Let $H_{s}=[\mathrm{vecs}(X)^{\top},\mathrm{vec}(Y)^{\top}]^{\top}$ be a nonzero vector, and
	suppose that $\Psi_{s}H_{s}=0$. Then we have
	\begin{align}\label{H1}
		&\Delta_{\tilde{\bar{x}}_i\tilde{\bar{x}}_i}\mathrm{vecs}(X)-\mathcal{I}_{\bar{x}_i\bar{x}_i}\mathrm{vec}[(K^*+\bar{K}_e)^{\top}Y+\nonumber\\
		&Y^{\top}(K^*+\bar{K}_e)]-2\mathcal{I}_{\bar{x}_i\bar{u}_i}\mathrm{vec}(Y)=0.
	\end{align}
	By \eqref{dS} and \eqref{Sintegrate}, we have
	\begin{align}\label{H2}
		&\Delta_{\tilde{\bar{x}}_i\tilde{\bar{x}}_i}\mathrm{vecs}(X)\nonumber\\
		=&\ \mathcal{I}_{\bar{x}_i\bar{x}_i}\mathrm{vec}\big[\big(A-B(K^*+\bar{K}_e)\big)^{\top}X\nonumber\\
		&+X\big(A-B(K^*+\bar{K}_e)\big)+(K^*+\bar{K}_e)^{\top}B^{\top}X\nonumber\\
		&+X^{\top}B(K^*+\bar{K}_e)\big]+2\mathcal{I}_{\bar{x}_i\bar{u}_i}\mathrm{vec}(B^{\top}X).
	\end{align}
	Then substituting \eqref{H2} into \eqref{H1} yields
	\begin{align}\label{H3}
		\mathcal{I}_{\bar{x}_i\bar{x}_i}\mathrm{vec}(N_1)+2\mathcal{I}_{\bar{x}_i\bar{u}_i}\mathrm{vec}(N_2)=0,
	\end{align}
	where $N_1=\big(A-B(K^*+\bar{K}_e)\big)^{\top}X+X\big(A-B(K^*+\bar{K}_e)\big)+(K^*+\bar{K}_e)^{\top}N_2+N_2^{\top}(K^*+\bar{K}_e)$ and $N_2=B^{\top}X-Y$.
	Under the rank condition \eqref{rank2.1}, equation \eqref{H3} implies that $N_1=N_2=0$. Since $A-B(K^*+\bar{K}_e)$ is Hurwitz, then $N_1=\big(A-B(K^*+\bar{K}_e)\big)^{\top}X+X\big(A-B(K^*+\bar{K}_e)\big)=0$ has a unique solution $X=0$, such that $Y=0$. This contradicts the assumption with $H_{s}\ne 0$, so $\Psi_{s}$ has full column rank. Based on the rank condition \eqref{rank2.2}, $\Psi_{\gamma}$ also has full column rank, that is, equation \eqref{data2.2} has a unique solution. This completes the proof. \hfill $\square$
\end{pf}

\begin{rem}
	To satisfy the rank conditions \eqref{rank1.1}, \eqref{rank1.2}, \eqref{rank2.1}, and \eqref{rank2.2}, $s$ must be not less than $\frac{n}{2}(n+1)+mn+\frac{m}{2}(m+1)$. This can be guaranteed by adding exploration noise to the control input for enriching the collected system data. In implementation, the control input is constructed as $u_i=-Kx_i-\bar{K}\bar{x}+e_i$, where $e_i(t)$ is the exploration noise. Common noise types in the literature include bounded random noise \citep{Al-Tamimi2007}, exponentially decaying probing noise \citep{Vamvoudakis2011}, and multi-frequency sinusoidal excitation \citep{Jiang2012}. In this paper, the last one is adopted for the subsequent simulation validation.	
\end{rem}
Based on Lemmas~\ref{L3.1} and \ref{L3.2}, we develop a model-free inverse RL algorithm (See Algorithm~\ref{algorithm2}) for solving Problem~\ref{IRL}. The proposed algorithm requires no prior knowledge of system dynamics, relying solely on observed trajectory data. 
\begin{algorithm}[!t]
	\caption{A model-free inverse RL algorithm for indefinite MF social optimization.}\label{algorithm2}
	\begin{algorithmic}[1]
		\STATE \textbf{Initialization:} Select an arbitrary matrix $R$, suitable initial weights $Q^0$, $\Gamma^0$, and control gains $K'$, $\bar{K}'$. Let $Q_{\Gamma}^0=(\Gamma^0)^{\top}Q^0+Q^0\Gamma^0-(\Gamma^0)^{\top}Q^0\Gamma^0$, and calculate $\tilde{K}_e$ by \eqref{K_e}.    
		\STATE \textbf{Data collection:} Apply $u_i=-K'x_i-\bar{K}'\bar{x}+e_i$ to each learner agent \eqref{x}, and collect enough state and input data from learner trajectories to meet the rank conditions.
		\STATE \textbf{Loop 1:} Set $l=0$, and $\epsilon_q$ as a small threshold.
		\REPEAT
		\STATE \textbf{Policy evaluation and improvement:} Calculate $P^l$, $\mathcal{B}_1^l$ and $\Theta^l$ by equation \eqref{data1.1}, and update $K^l$ by $K^l=(R+\Theta^l)^{-1}\mathcal{B}_1^l$.
		\STATE \textbf{Weight improvement:} Update $Q^{l+1}$ by equation \eqref{data1.2}.
		\STATE Let $l=l+1$.
		\UNTIL{$\|Q^{l}-Q^{l-1}\|\le \epsilon_q$.}
		\STATE Let the final values $K^* = K^{l-1}$, $P^* = P^{l-1}$, $Q^* = Q^{l}$, and $\Theta^* = \Theta^{l-1}$.
		\STATE \textbf{Loop 2:} Set $k=0$, and $\epsilon_{\gamma}$ as a small threshold. 
		\REPEAT
		\STATE \textbf{MF policy evaluation and improvement:} Calculate $S^{k}$ and $\mathcal{B}_2^k$ using equation \eqref{data2.1}, and update $\bar{K}^k$ by $\bar{K}^{k}= (R+\Theta^*)^{-1}\mathcal{B}_2^k$.
		\STATE \textbf{MF weight improvement:} Update $Q_{\Gamma}^{k+1}$ by equation \eqref{data2.2}.
		\STATE Let $k=k+1$.
		\UNTIL{$\|Q_{\Gamma}^{k}-Q_{\Gamma}^{k-1}\|\le \epsilon_{\gamma}$.}
		\STATE Let final values $\bar{K}^* = \bar{K}^{k-1}$, $S^* = S^{k-1}$, and $Q_{\Gamma}^* = Q_{\Gamma}^{k}$.
		Moreover, $\Pi^*=P^*+S^*$. Applying $u_{i}= -K^*x_{i}-\bar{K}^*\bar{x}$ to learner agents enables them to imitate the trajectories of the expert population system.	
	\end{algorithmic}
\end{algorithm}

Finally, we conduct the convergence analysis of Algorithm \ref{algorithm2} to theoretically validate its effectiveness.
\begin{thm}\label{convergence2}
	Based on Lemmas~\ref{L3.1} and \ref{L3.2}, as $l,k \to \infty$, $P^l$, $K^l$, $Q^l$, $S^k$, $\bar{K}^k$, and $Q_{\Gamma}^k$ in Algorithm \ref{algorithm2} converge to $P^*$, $K^*$, $Q^*$, $S^*$, $\bar{K}^*$, and $Q_{\Gamma}^*$, respectively, as defined in equations \eqref{P*}--\eqref{barK*}.
\end{thm}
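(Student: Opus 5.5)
The plan is to show that, under the rank conditions of Lemmas~\ref{L3.1} and \ref{L3.2}, each iteration of Algorithm~\ref{algorithm2} produces exactly the same iterate as the corresponding step of Algorithm~\ref{algorithm1}, given the same initialization $R$, $Q^0$, $\Gamma^0$. Convergence then follows directly from Theorems~\ref{convergence_P} and \ref{convergence_S}. The argument is an induction on $l$ in Loop~1, followed by an induction on $k$ in Loop~2.

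\textbf{Loop 1.} Suppose the current weight $Q^l$ coincides with that of Algorithm~\ref{algorithm1}. Let $P^l$ be the model-based solution of \eqref{Pl}, and set $\mathcal{B}_1^l=B^{\top}P^l+D^{\top}P^lC$ and $\Theta^l=D^{\top}P^lD$.

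First, by the Itô derivation \eqref{Pito}--\eqref{Pintegrate}, this triple satisfies $\Phi_p[\mathrm{vecs}(P^l)^{\top},\mathrm{vec}(\mathcal{B}_1^l)^{\top},\mathrm{vecs}(\Theta^l)^{\top}]^{\top}=\Xi_p^l$ on every interval $[t+(j-1)T,t+jT]$. This holds for whatever admissible exploratory input $u_i$ generated the data.

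Second, Lemma~\ref{L3.1} shows that $\Phi_p$ has full column rank. Hence the least-squares solution \eqref{data1.1} is the unique exact solution, and it equals the model-based triple. Consequently $K^l=(R+\Theta^l)^{-1}\mathcal{B}_1^l$ agrees with \eqref{Kl}.

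Third, \eqref{Ql} implies \eqref{ExQlx}, so the model-based $Q^{l+1}$ solves $\Phi_q\mathrm{vecs}(Q^{l+1})=\Xi_q^l$. Full column rank of $\Phi_q=\mathcal{I}_{\tilde x_i}$ makes this solution unique, so \eqref{data1.2} returns the model-based $Q^{l+1}$.

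This closes the induction. The sequences $\{P^l,K^l,Q^l\}$ therefore coincide with those of Algorithm~\ref{algorithm1}, and Theorem~\ref{convergence_P}(3) gives convergence to $P^*,K^*=K_e,Q^*$. In particular $\Theta^l\to\Theta^*=D^{\top}P^*D$, so $R+\Theta^*=\Lambda^*$.

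\textbf{Loop 2.} The same argument applies with \eqref{xbarKe}--\eqref{ExGammalx} in place of the Itô identities. The model-based $S^k$ and $\mathcal{B}_2^k=B^{\top}S^k$ satisfy $\Psi_s[\cdot]=\Upsilon_s^k$, and Lemma~\ref{L3.2} gives uniqueness. So $\bar K^k=(\Lambda^*)^{-1}\mathcal{B}_2^k$ matches \eqref{barKl}. Likewise, $Q_\Gamma^{k+1}$ from \eqref{data2.2} matches \eqref{Gammal}. Theorem~\ref{convergence_S}(3) then yields convergence to $S^*$, $\bar K^*=\bar K_e$ and $Q_\Gamma^*$.

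\textbf{Main obstacle.} The delicate point is the coupling between the two loops. Loop~2 uses the terminal values $K^*$ and $\Theta^*$ from Loop~1, which in practice come from a finite stopping rule. I would handle this by first treating the idealized case in which Loop~2 is run with the exact limits, and in that case the induction is exact. If needed, I would add a continuity remark: the least-squares maps \eqref{data2.1}--\eqref{data2.2} are continuous in $K^*$ and $\Theta^*$ because $\Psi_s$ keeps full column rank. I would also check that the data identities remain valid under the exploratory policy $u_i=-K'x_i-\bar K'\bar x+e_i$. They do, since the derivations of \eqref{Pintegrate} and \eqref{ExQlx} used only the open-loop dynamics \eqref{xKe} and the model-based equations, not the form of $u_i$.
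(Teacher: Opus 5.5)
Your proposal is correct and follows the same route as the paper: the rank conditions make the least-squares problems in Lemmas~\ref{L3.1} and \ref{L3.2} uniquely solvable, so Algorithm~\ref{algorithm2} reproduces the iterates of Algorithm~\ref{algorithm1} step by step, and convergence is inherited from Theorems~\ref{convergence_P} and \ref{convergence_S}. Your write-up makes explicit what the paper's short proof leaves implicit: the induction over $l$ and $k$, the fact that the model-based iterates satisfy the data equations for any exploratory input, and the idealization that Loop~2 is run with the exact limits $K^*=K_e$ and $\Theta^*=D^{\top}P^*D$ (with the initialization hypotheses of those two theorems carried over through the shared choice of $R$, $Q^0$, $\Gamma^0$).
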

\begin{pf}
	Under the rank conditions \eqref{rank1.1}, \eqref{rank1.2}, \eqref{rank2.1}, and \eqref{rank2.2} in Lemmas~\ref{L3.1} and \ref{L3.2}, the iterative equations \eqref{data1.1}, \eqref{data1.2}, \eqref{data2.1}, and \eqref{data2.2} admit unique solutions. Thus, the data-driven equations in Algorithm \ref{algorithm2} are equivalent to the model-based equations in Algorithm \ref{algorithm1}. Therefore, the convergence result of Algorithm \ref{algorithm2} follows from Theorems \ref{convergence_P} and \ref{convergence_S}. This completes the proof. \hfill $\square$
\end{pf}

\section{Simulation}\label{4}
In this section, numerical simulations are provided to validate the effectiveness of the model-based and model-free inverse RL algorithms, namely Algorithms \ref{algorithm1} and \ref{algorithm2}. 

\textbf{Example 1.} The expert and learner populations each contain 40 homogeneous agents. The coefficients of the system \eqref{x_e} are
\vspace{-0.9em}
\[A=\begin{bmatrix}
	1 & 1\\
	3 & 2
\end{bmatrix},\
B=\begin{bmatrix}
	1\\
	2
\end{bmatrix},\ 
C=\begin{bmatrix}
	0.05& 0.03\\
	0.05& 0.02
\end{bmatrix},\
D=\begin{bmatrix}
	0.02\\
	0.01
\end{bmatrix},\]
and the noise term $w_i\sim \mathcal{N}(0,0.1)$.
The weight matrices for the expert social cost function \eqref{J_e}, and the solutions to SAREs \eqref{P_e} and \eqref{S_e} are 
\vspace{-1em}
\begin{align*}
	Q_e&=\begin{bmatrix}
		-1 & 0\\
		0 & 1
	\end{bmatrix},\ \Gamma_e=\begin{bmatrix}
		1 & 0\\
		0 & 1
	\end{bmatrix},\ R_e=1, \\
	P_e&=\begin{bmatrix}
		0.6524 & 0.8106\\
		0.8106 & 0.8019
	\end{bmatrix},\ S_e=\begin{bmatrix}
		0.3759 & -0.0220\\
		-0.0220 & -0.1962
	\end{bmatrix}.
\end{align*}
Besides, the optimal control gains are given by $K_e=[2.2744\ \: 2.4139]$ and 
$\bar{K}_e=[0.3317\ \: -0.4140]$. For generalization, we set $R=0.8$ for the learner, which differs from $R_e$, and select the initial weights as
\vspace{-1em}
\[Q^0=\begin{bmatrix}
	-4.0322 & 0\\
	0 & 0.6786
\end{bmatrix},\ \Gamma^0=\begin{bmatrix}
	-2.8836 & 0.4736\\
	-0.2079 & 0.2599
\end{bmatrix}.\] 
First, we solve the inverse RL problem using Algorithm~\ref{algorithm1} with stopping criteria $\epsilon_q=\epsilon_\gamma=10^{-6}$. After convergence, the learned gains are $K^*=[2.2738\ \: 2.4130]$ and $\bar{K}^*=[0.3308\ \: -0.4147]$. The Frobenius norm errors relative to the expert gains are $\|K^*-K_e\|_F=\|\bar{K}^*-\bar{K}_e\|_F=0.0011$. 
Fig.~\ref{fig1} shows the convergence of $Q^l$, $K^l$, $P^l$, $Q_{\Gamma}^k$, $\bar{K}^k$ and $S^k$ using the model-based inverse RL algorithm. Fig.~\ref{fig2} shows that the learner closely matches the expert trajectories under the gains learned from Algorithm~\ref{algorithm1}.
\begin{figure}[!t]
	\centering
	\begin{subfigure}{0.22\textwidth}
		\centering
		\includegraphics[height=2cm,width=4cm]{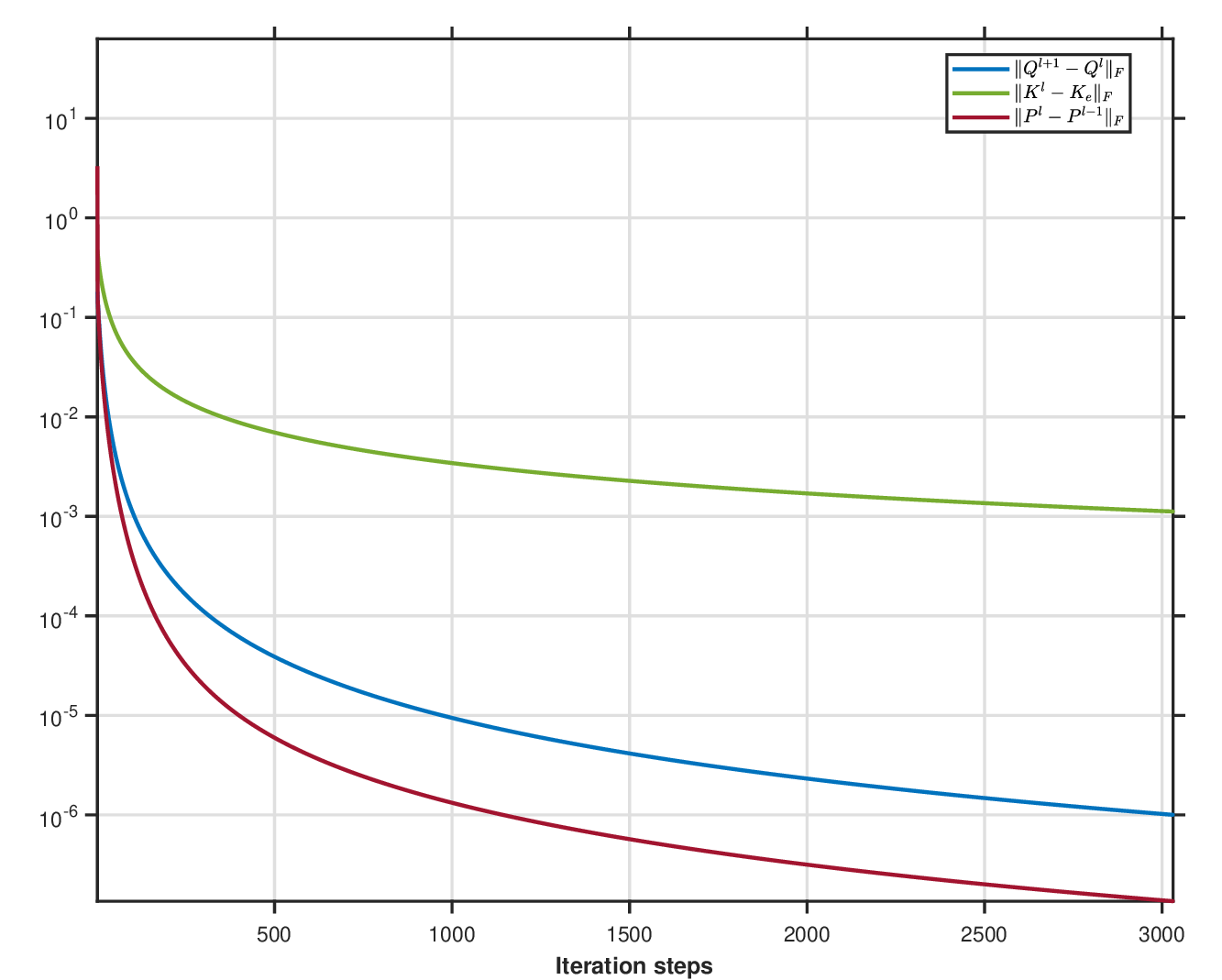} 
		\label{fig1:a}
	\end{subfigure}
	\hfill 
	\begin{subfigure}{0.22\textwidth}
		\centering
		\includegraphics[height=2cm,width=4cm]{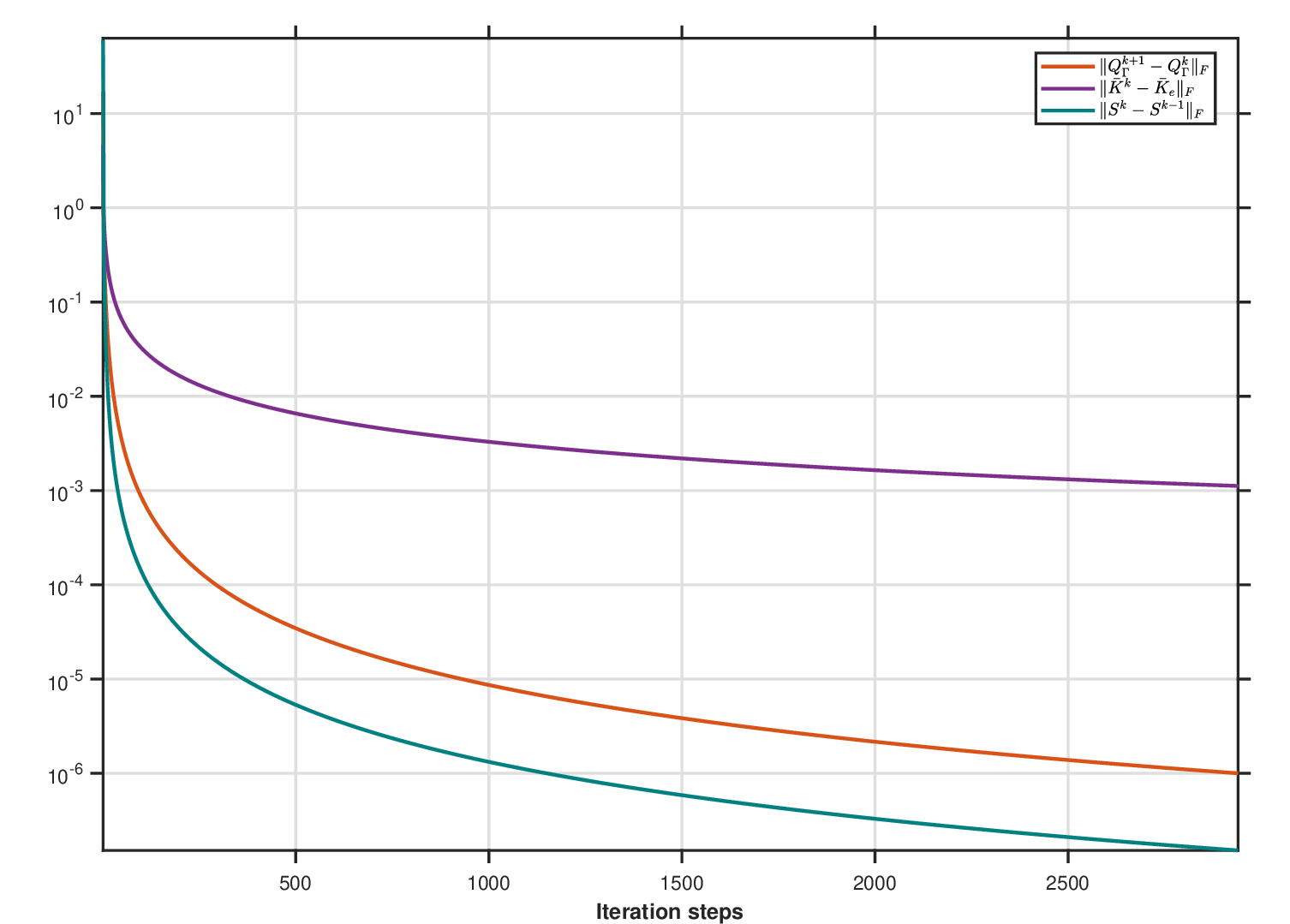} 
		\label{fig1:b}
	\end{subfigure}
    \vspace{-4pt}
	\caption{Convergence of $Q^l$, $K^l$, $P^l$, $Q_{\Gamma}^k$, $\bar{K}^k$ and $S^k$ using Algorithm~\ref{algorithm1}.}
	\label{fig1}
\end{figure}
\begin{figure}[!t]
	\centering
	\begin{subfigure}{0.22\textwidth}
		\centering
		\includegraphics[height=2cm,width=4cm]{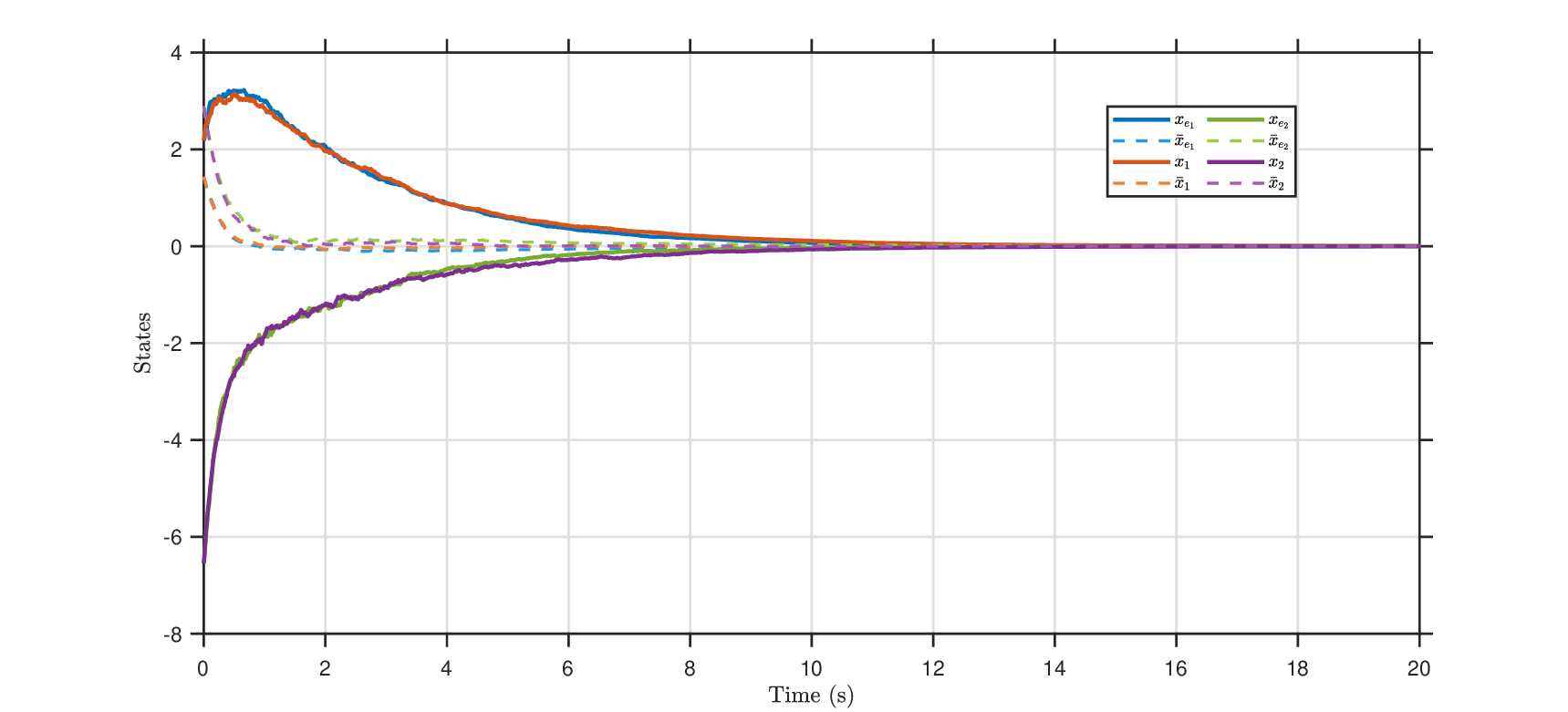} 
		\label{fig2:a}
	\end{subfigure}
	\hfill 
	\begin{subfigure}{0.22\textwidth}
		\centering
		\includegraphics[height=2cm,width=4cm]{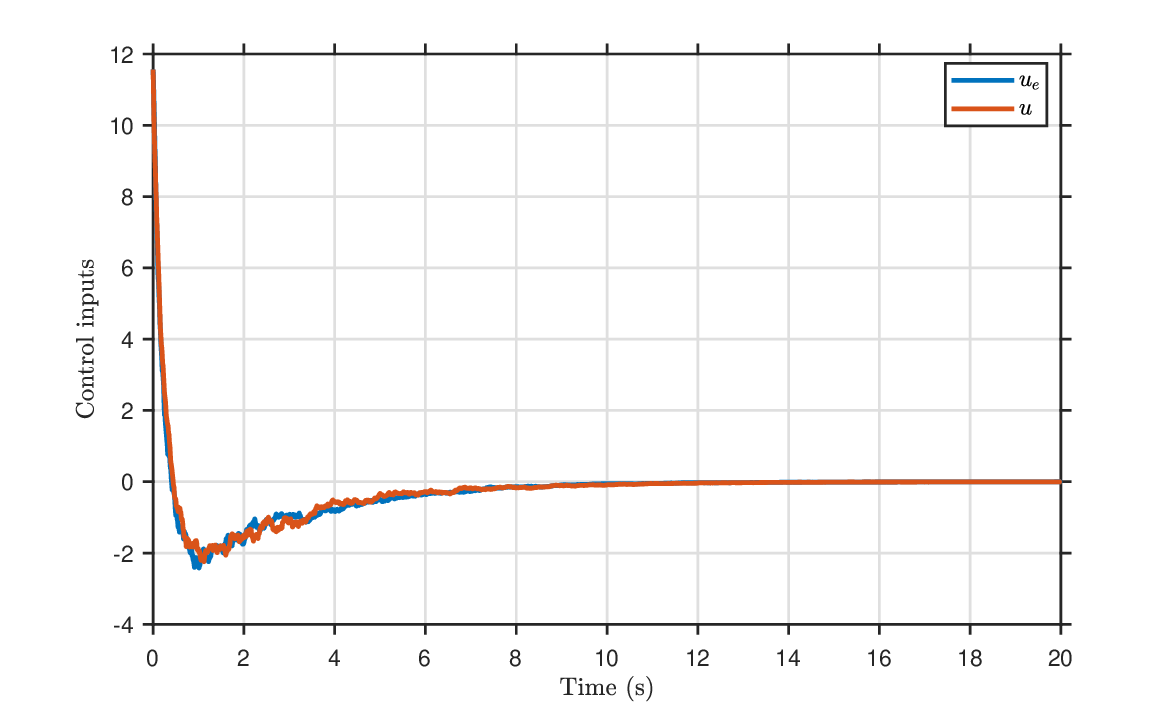} 
		\label{fig2:b}
	\end{subfigure}
    \vspace{-4pt}
	\caption{Trajectory imitation performance of the learner using Algorithm~\ref{algorithm1}.}
	\label{fig2}
\end{figure}

Next, we implement Algorithm~\ref{algorithm2} and present the simulation results. The initialization parameters are the same as those in Algorithm~\ref{algorithm1}. Then $u_i=[-0.9098\:\ -0.9656]x_i+[-0.0663\:\ 0.0828]\bar{x}+e$
are adopted to collect trajectory data within 1 second, where the exploration noise $e=2\sum_{j=1}^{100}sin(\omega_jt)$ with $\omega_j\in[-500,500]$ guarantees that the required rank conditions hold. When $\|Q^{l}-Q^{l-1}\|\le 10^{-5}$ and $\|Q_{\Gamma}^{k}-Q_{\Gamma}^{k-1}\|\le 10^{-5}$, the control gains converge to $K^*=[2.2679\:\ 2.4112]$ and $\bar{K}^*=[0.3302\:\ -0.4165]$ with $\|K^*-K_e\|_F=0.0070$ and $\|\bar{K}^*-\bar{K}_e\|_F=0.0029$. The norm errors rise slightly due to estimation errors, yet convergence remains favorable. Fig.~\ref{fig3} illustrates the convergence curves of the same sequences using Algorithm~\ref{algorithm2}, and Fig.~\ref{fig4} verifies that trajectories generated by the model-free learned gains closely align with expert demonstrations.
\begin{figure}[!t]
	\centering
	\begin{subfigure}{0.22\textwidth}
		\centering
		\includegraphics[height=2cm,width=4cm]{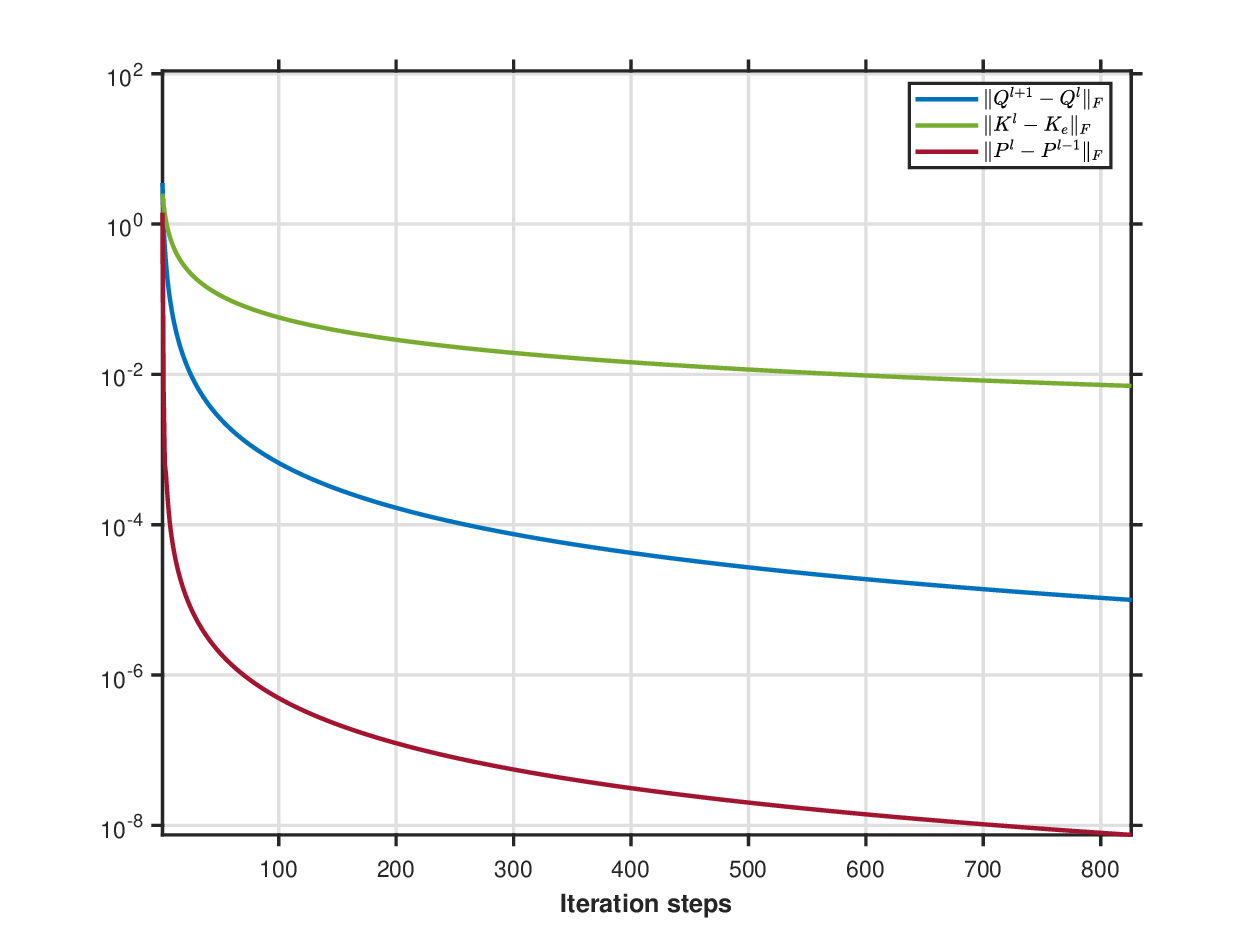} 
		\label{fig3:a}
	\end{subfigure}
	\hfill 
	\begin{subfigure}{0.22\textwidth}
		\centering
		\includegraphics[height=2cm,width=4cm]{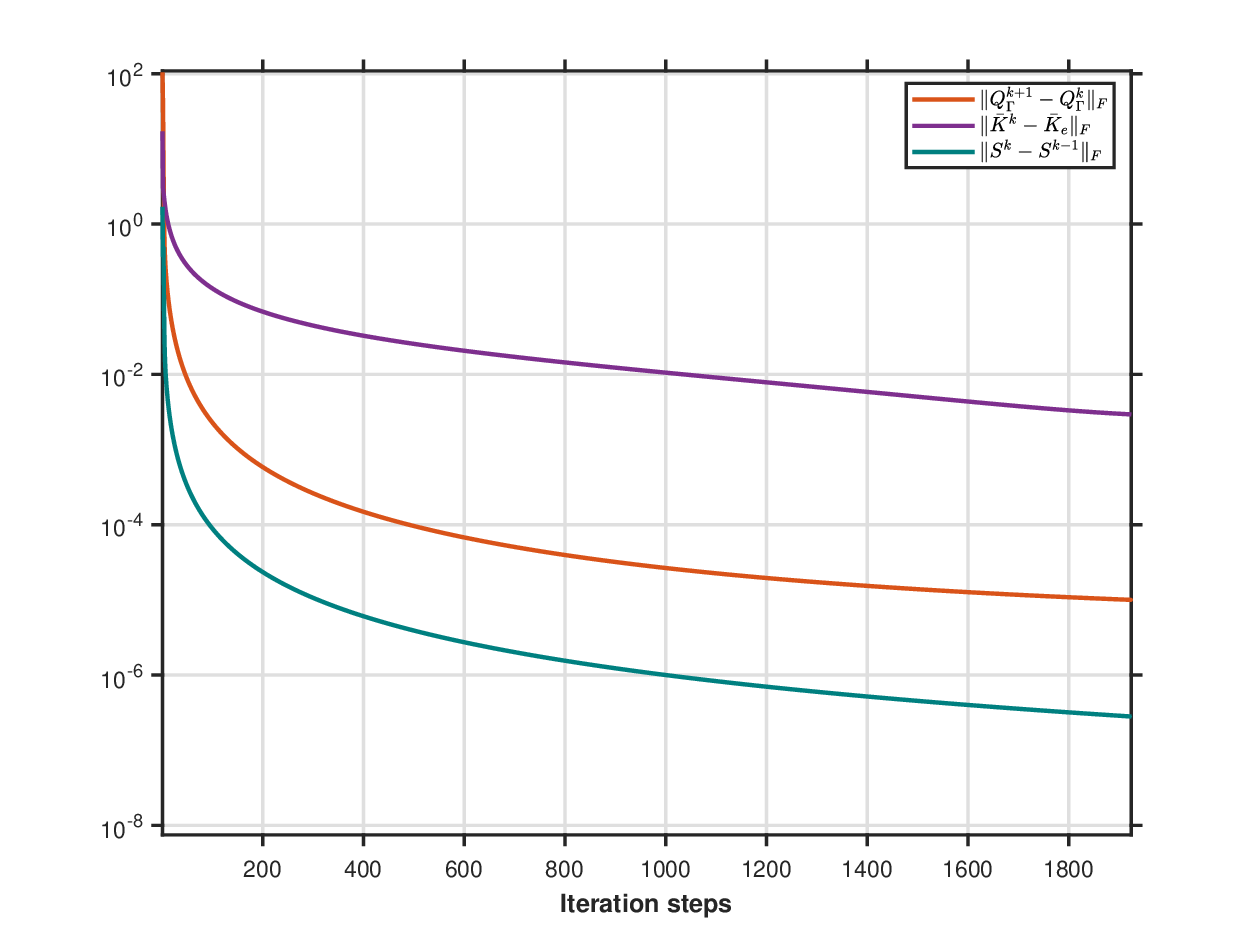} 
		\label{fig3:b}
	\end{subfigure}
    \vspace{-4pt}
	\caption{Convergence of $Q^l$, $K^l$, $P^l$, $Q_{\Gamma}^k$, $\bar{K}^k$ and $S^k$ using Algorithm~\ref{algorithm2}.}
	\label{fig3}
\end{figure}
\begin{figure}[!t]
	\centering
	\begin{subfigure}{0.22\textwidth}
		\centering
		\includegraphics[height=2cm,width=4cm]{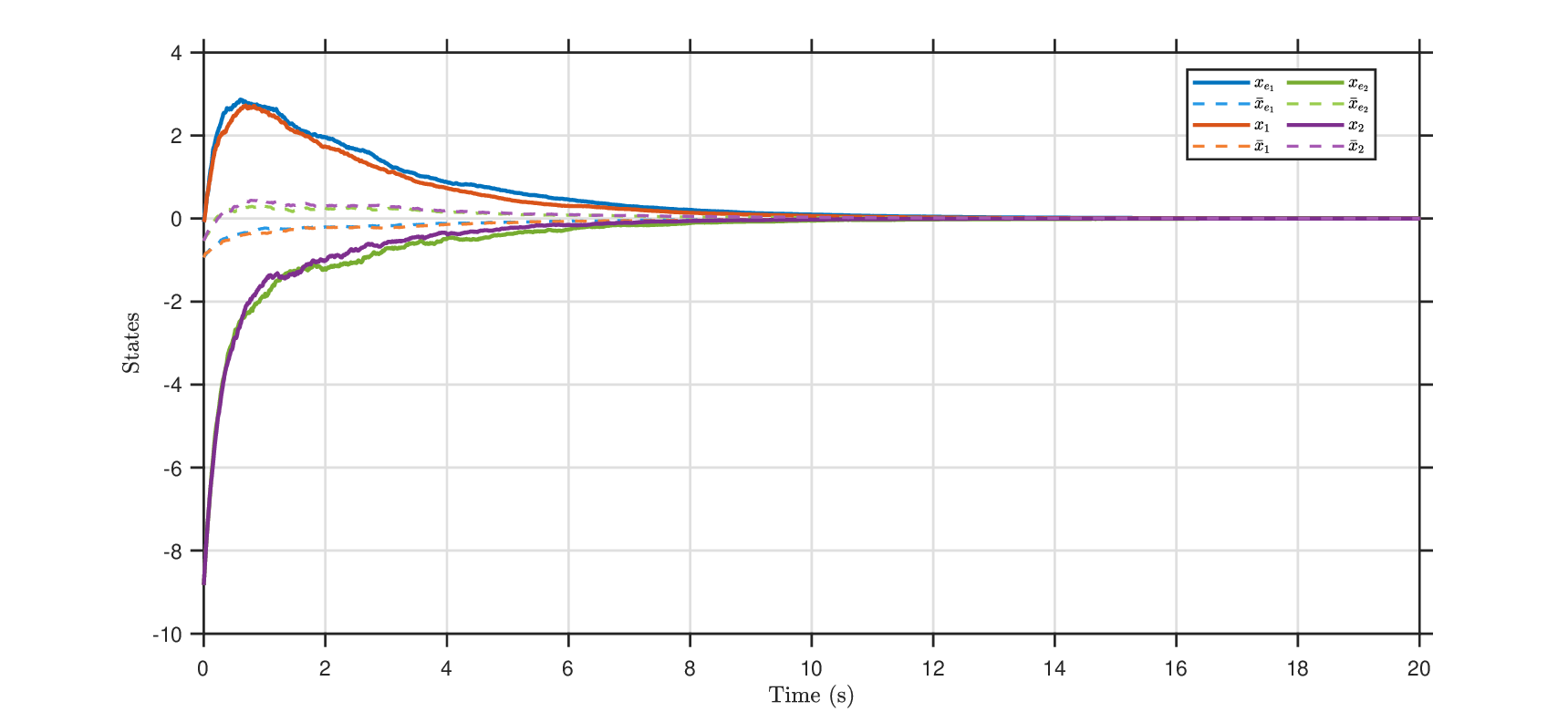} 
		\label{fig4:a}
	\end{subfigure}
	\hfill 
	\begin{subfigure}{0.22\textwidth}
		\centering
		\includegraphics[height=2cm,width=4cm]{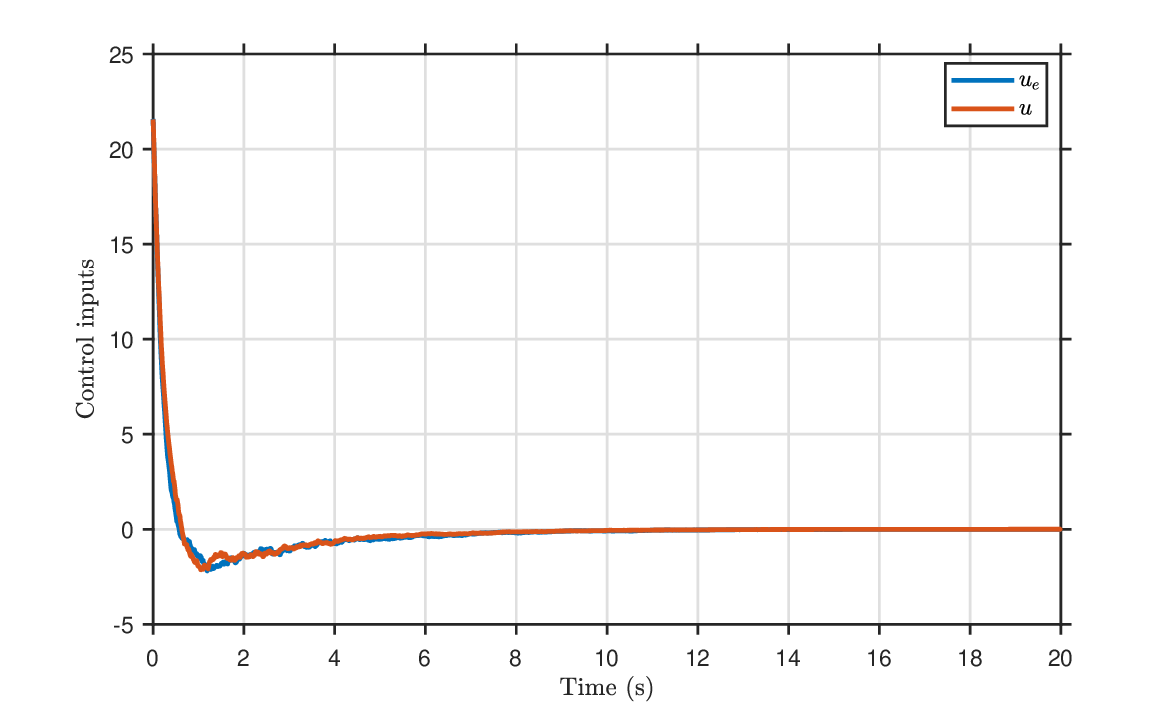} 
		\label{fig4:b}
	\end{subfigure}
    \vspace{-4pt}
	\caption{Trajectory imitation performance of the learner using Algorithm~\ref{algorithm2}.}
	\label{fig4}
\end{figure}

Then we set $R=0.5,1,1.5$ for Algorithm~\ref{algorithm2} to show the non-uniqueness of recovered weights. Despite varying $R$, the learned gains $K^*$ and $\bar{K}^*$ closely match the expert counterparts. Table \ref{tab1} summarizes the converged values $P^*,S^*,Q^*,Q_{\Gamma}^*$ under different $R$.
\begin{table}[H]
	\centering
	\renewcommand{\arraystretch}{1.1}
	\caption{Converged values of Algorithm~\ref{algorithm2} under different $R$.}
	\label{tab1}
	\begin{tabular}{lrrr}
		\hline
		Parameter & $R=0.5$ & $R=1$ & $R=1.5$ \\
		\hline
		$[P^*]_{11}$        & $-0.0001$ & $-0.0074$ & $ 0.0142$ \\
		$[P^*]_{12}$        & $-0.0012$ & $ 0.0096$ & $-0.0261$ \\
		$[P^*]_{22}$        & $ 0.0023$ & $-0.0134$ & $ 0.0399$ \\
		\hline
		$[S^*]_{11}$        & $-0.0080$ & $-0.0150$ & $-0.0342$ \\
		$[S^*]_{12}$        & $ 0.0111$ & $ 0.0225$ & $ 0.0418$ \\
		$[S^*]_{22}$        & $-0.0156$ & $-0.0324$ & $-0.0562$ \\
		\hline
		$[Q^*]_{11}$        & $ 0.8085$ & $ 1.5428$ & $ 3.3351$ \\
		$[Q^*]_{12}$        & $ 1.1232$ & $ 2.2121$ & $ 3.1947$ \\
		$[Q^*]_{22}$        & $ 1.0729$ & $ 2.1579$ & $ 3.3096$ \\
		\hline
		$[Q_{\Gamma}^*]_{11}$ & $-0.1751$ & $-0.1005$ & $-0.2703$ \\
		$[Q_{\Gamma}^*]_{12}$ & $-0.0008$ & $-0.1148$ & $-0.1205$ \\
		$[Q_{\Gamma}^*]_{22}$ & $ 0.2948$ & $ 0.6117$ & $ 0.9184$ \\
		\hline
	\end{tabular}
\end{table}

\textbf{Example 2.} This work \cite{Xu2025} develops a model-free RL method for MF social optimization. We adopt its simulation example as an expert population system, whose system matrices are
\vspace{-3em}
\begin{small}
\begin{align*}
	\renewcommand{\arraystretch}{0.98} 
	A=\begin{bmatrix}
		0.3 & 0.7 \\
		-0.9 & 0.5
	\end{bmatrix},
	B=\begin{bmatrix}
		0.2 \\
		0
	\end{bmatrix},
    C=\begin{bmatrix}
		0.05 & 0.03 \\
		0.05 & 0.02
	\end{bmatrix},
	D=\begin{bmatrix}
		0.05 \\
		0.06
	\end{bmatrix}.
\end{align*}
\end{small}
The coefficients of cost functions are
\vspace{-1em}
\begin{align*}
	Q_e&=\begin{bmatrix}
		3 & 0\\
		0 & 2
	\end{bmatrix},\ \Gamma_e=\begin{bmatrix}
		0.9 & 0\\
		0 & 0.9
	\end{bmatrix},\ R_e=1.25.
\end{align*}
The control gains estimated by the RL method are $K_e=[8.4670\ \: -4.9231]$ and $\bar{K}_e=[-0.4899\ \: 0.4977]$, and the estimated solutions are 
\vspace{-1em}
\begin{align*}
	P_e&=\begin{bmatrix}
		61.800 &  -36.598\\
		-36.598 &   84.241
	\end{bmatrix},\ S_e=\begin{bmatrix}
		-3.5591 &   3.6498\\
		3.6498 &  -9.8665
	\end{bmatrix}.
\end{align*}
Our Algorithm~\ref{algorithm2} does not require prior knowledge of cost weights. We select $R = 1$ and recover the unknown weight matrices only from observed expert trajectories. The learned gains are $K^*=[8.4536\ \: -4.9267]$ and $\bar{K}^*=[-0.4954\ \: 0.4916]$. The converged values $P^*,S^*,Q^*,Q_{\Gamma}^*$ are given as 
\vspace{-1em}
\begin{align*}
	P^*&=\begin{bmatrix}
		0.2192 &  -0.1857\\
		-0.1857 &   0.1500
	\end{bmatrix},\ S^*=\begin{bmatrix}
		-0.0997 &   0.0863\\
		0.0863 &  -0.0721
	\end{bmatrix},\\
    Q^*&=\begin{bmatrix}
    	56.326 & -32.917\\
    	-32.917 &  19.389
    \end{bmatrix}, Q_\Gamma^*=\begin{bmatrix}
    	6.0488  & -4.8727\\
    	-4.8727  &  3.6302
    \end{bmatrix}.
\end{align*}
Note that the solutions $P^*$ and $S^*$ are orders of magnitude smaller than $P_e$ and $S_e$ reported in \cite{Xu2025}, whereas the control gains remain nearly identical. This not only demonstrates the non-uniqueness of inverse RL solutions, but also shows that our algorithm converges to substantially more compact Riccati solutions under an alternative cost parameterization. 

\section{Conclusion}\label{5}
This paper develops a novel inverse RL framework for indefinite LQ MF social control with multiplicative noise. Within this framework, both model-based and model-free algorithms are devised to recover unknown social cost weights and replicate the expert’s optimal control policies. Specifically, the model-free algorithm adopts integral RL to reformulate the model-based iterative equations as purely data-driven updates, relying exclusively on measured system trajectories for implementation. Under proper rank conditions, we provide the proofs of convergence, stability and non-uniqueness of solutions. This framework avoids laborious manual design of objective functions and leverages expert demonstrations to infer collective optimization objectives for stochastic large-scale cooperative systems.

\begin{ack}                               
This work was supported by the National Natural Science Foundation of China under Grants 62573266, 62192753, the National Key R\&D Program of China (2022YFF0712700), the Research Grants Council of Hong Kong under grant 15225124, and PolyU 1-ZVXA, and 4-ZZLT.
\end{ack}

\bibliographystyle{apalike2}   
\bibliography{autosam}

\end{document}